\documentclass{elsarticle}
\usepackage{amssymb, amscd, verbatim, amsthm, amsmath, MnSymbol, stmaryrd}
\bibliographystyle{elsarticle-num}

\numberwithin{equation}{section}
\newtheorem{theorem}{Theorem}[section]
\newtheorem{lemma}[theorem]{Lemma}
\newtheorem{corollary}[theorem]{Corollary}
\newtheorem{prop}[theorem]{Proposition}

\theoremstyle{definition}

\def\tr{\Delta}
\def\bea{\begin{eqnarray*}}
\def\eea{\end{eqnarray*}}
\def\be{\begin{eqnarray}}
\def\ee{\end{eqnarray}}
\def\a{\alpha}

\def\n{\nabla}

\setlength{\textheight}{22 cm}
\setlength{\textwidth}{17 cm}

\setlength{\oddsidemargin}{-.5 cm}
\setlength{\evensidemargin}{-.5cm}
\setlength{\topmargin}{0.4 cm}

\begin{document}

\begin{frontmatter}
 \title{Vacuum static spaces with vanishing of complete divergence of Bach tensor and Weyl tensor } 

\author[sh]{Seungsu Hwang}
\ead{seungsu@cau.ac.kr} 
\author[gy]{Gabjin Yun}
\ead{gabjin@mju.ac.kr}

\address[sh]{Department of Mathematics, Chung-Ang University,
84 HeukSeok-ro DongJak-gu, Seoul 06974, Republic of Korea.
}
\address[gy]{ Department of Mathematics, Myong Ji University, 
116 Myongji-ro Cheoin-gu, Yongin, Gyeonggi 17058, Republic of Korea. } 

\begin{keyword}
vacuum static space \sep Bach tensor \sep Weyl tensor \sep black holes \sep Besse conjecture \sep Einstein metric
\MSC[2010] 53C25\sep  58E11
\end{keyword}

 \begin{abstract}
 In this paper, we study vacuum static spaces with 
the complete divergence of the Bach tensor and Weyl tensor. First, we prove that the vanishing of complete divergence of the Bach tensor and Weyl tensor implies the harmonicity of the metric, and we present examples in which these conditions do not imply Bach flatness. As an application, we prove the non-existence of multiple black holes in vacuum static spaces with zero scalar curvature. On the other hand, we prove the Besse conjecture under these conditions, which are weaker than harmonicity or Bach flatness. Moreover, we show a rigidity result for vacuum static spaces and find a sufficient condition for the metric to be Bach-flat. 
 \end{abstract}

\end{frontmatter}
\section{Introduction}

An $n$-dimensional complete Riemannian manifold $(M,g)$ is said to be a {\it static space} with 
a perfect fluid if there exists a smooth non-trivial function $f$ on $M$ satisfying 
\be 
D_gdf -\left( r_g -\frac {s_g}{n-1}g\right) f =\frac 1n \left( \frac {s_g}{n-1}\,f +\tr_g f\right)g,
\label{eqn1}\ee
where $D_gdf$ is the Hessian of $f$,  $r_g$ is the Ricci tensor of $g$ with its scalar curvature 
$s_g$,  and
$\tr_g f$ is the (negative) Laplacian of $f$. In particular, if 
\be
\tr_g f =-\frac {s_g}{n-1}\, f, \label{eqn2018-9-5-1}
\ee
$(M,g)$ is said to be a {\it vacuum static space}. In this case, equation (\ref{eqn1}) reduces to 
\be 
D_gdf  - \left( r_g -\frac {s_g}{n-1}g\right) f =0. \label{static1}
\ee
The above equation was considered by
Fischer and Marsden (\cite{f-m}) in their study of the surjectivity of a linearized  scalar curvature 
functional in the space of Riemannian metrics (cf. \cite{kob}, \cite{shen}).
More precisely, the linearized scalar curvature $s_g'$ is given by
$$ 
s_g'(h) =-\tr_g {\rm tr}_g\, h +\delta\delta h - \langle r_g, h\rangle
$$
for any  symmetric bilinear form $h$ on $M$ (cf. \cite{Be}). Here, $\delta = - {\rm div}$ 
is the (negative) divergence, which is defined by 
$\displaystyle{\delta h (X) = -\sum_{i=1}^n D_{E_i}h(E_i, X)}$ for any vector $X$ and a local frame
$\{E_i\}$.
Then, the $L^2$-adjoint operator
 $s_g'^*$ with respect to the metric $g$ is given by
\be 
s_g'^*(f)=D_gd f-(\tr_g f)g - f\, r_g\label{eqn2016-9-5-4}
\ee
for any smooth function $f$ on $M$. Thus, if a smooth function $f$ on $M$ is a solution of the 
vacuum static equation (\ref{static1}), then $f$ 
 is an element of the kernel space, $\ker s_g'^*$, of  the operator $s_g'^*$.
By taking the divergence of (\ref{static1}), we have $\frac 12 f\, ds_g=0$, which implies that $s_g$ is constant on $M$ since there is no critical point of $f$ in $f^{-1}(0)$ (see, for example, \cite{Bour}). When $M$ is compact, it is  known (\cite{Bour}) that a compact vacuum static space is either isometric to a Ricci-flat manifold with $\ker s_g'^* ={\Bbb R}\cdot 1$ and $s_g=0$, or the scalar curvature $s_g$ is a strictly positive constant and $\frac{s_g}{n-1}$ is an eigenvalue of the Laplacian.
It turns out that the warped product manifold 
$(M\times_{f} {\mathbb R}, g\pm f^2 dt^2)$ is Einstein when $f \in \ker s_g'^*$ (\cite{corv}).

Some rigidity results related to vacuum static spaces have been found. For example, 
 Qing and Yuan showed (\cite{QY}) that, if $(M,g,f)$ is a Bach-flat vacuum 
static space with compact level sets of $f$,  then it is either Ricci-flat, isometric to $S^n$,  $H^n$,  
or  a warped product space. Recall that the Bach tensor (see Section 2 for definition) 
discussed first by Bach in \cite{bach}
is deeply related  to general relativity and conformal geometry (cf. \cite{lis}), and
in  dimension $n=4$, it is well known (\cite{Be}) that the Bach tensor 
is conformally invariant and arises as a gradient of the total Weyl curvature 
functional, which is given by the integral of the square norm of the Weyl tensor. On the other hand, Kim and Shin proved a local 
classification of  four-dimensional  vacuum static spaces with harmonic curvature, or ${\rm div}\, { R}=0$ (\cite{ks}).
We say that a Riemannian manifold $(M, g)$ has harmonic curvature
if ${\rm div}\,  R = 0$, or equivalently, that the Ricci tensor $r_g$ is a Codazzi tensor.

Therefore, it is interesting to investigate the rigidity of vacuum static spaces under weaker curvature 
conditions than $B=0$ or ${\rm div}\, {R}=0$. Towards this objective, Catino, Mastrolia, and Monticelli (\cite{CMM})
considered the complete divergence of the Weyl
tensor to classify gradient Ricci solitons. They proved 
a classification theorem for gradient shrinking Ricci solitons
of dimensions $n \ge 4$ under the following curvature condition: $$
\mbox{div}^4 {\mathcal W} =0.$$

The purpose of this paper is to find rigidity results for vacuum static spaces under weaker curvature 
conditions. First, we consider the vanishing of complete divergence of the Bach tensor 
with ${\rm div}^4 {\mathcal W}=0$.
It is natural to inquire whether ${\rm div}^2 B=0$ with ${\rm div}^4 {\mathcal W}=0$ on vacuum 
 static spaces guarantees Bach flatness. However, because there exist 
 $n$-dimensional vacuum static spaces that satisfy ${\rm div}^2 B=0$ 
 and ${\rm div}^4{\mathcal W}=0$ with $B\neq 0$ for $n\geq 4$ (Proposition~\ref{main1}), Bach flatness is not guaranteed. 
In this paper, we prove that  ${\rm div}^2 B=0$ with ${\rm div}^4 {\mathcal W}=0$ on an 
$n$-dimensional complete (non-compact) vacuum static space does imply the harmonicity of 
the metric for $n\geq 5$.

\begin{theorem}\label{main3}
Let $(M^n, g, f), n \ge 5,$ be an $n$-dimensional  complete vacuum static space
with compact level sets of $f$. If ${\rm div}^2 B=0$ and ${\rm div}^4 {\mathcal W}=0$, 
then $(M,g)$ has harmonic curvature. In particular, ${\rm div} B=0$.
\end{theorem}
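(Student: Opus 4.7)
\emph{Overall strategy.} The plan is to extract the Cotton tensor $C$ as the key unknown. Since the vacuum static equation forces $s_g$ to be constant, the contracted second Bianchi identity gives $\mathrm{div}\,\mathcal{W} = -\tfrac{n-3}{n-2}\,C$, so $\mathrm{div}^4\mathcal{W}=0$ is equivalent to $\mathrm{div}^3 C = 0$, and ``harmonic curvature'' is equivalent to $C \equiv 0$ (together with the already-known constancy of $s_g$). Thus the theorem reduces to showing $C\equiv 0$ on $M$. To do so, I would produce a weighted $L^2$-identity of the form $\int_{\Omega_c} f^{\alpha}|C|^2\,dV = (\text{boundary})$ on the precompact region $\Omega_c = \{|f|\le c\}$, whose boundary is compact by assumption, and then let $c\to\sup|f|$.

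\emph{Key pointwise identities.} First I would exploit the static equation $Ddf = f\bigl(r_g-\tfrac{s_g}{n-1}g\bigr)$ to rewrite $fB_{ij}$ and $f(\mathrm{div}\,B)_i$ pointwise in terms of $C$, $\mathcal W$, $\nabla f$, and lower-order curvature contractions, in the spirit of the gradient-Ricci-soliton identities used by Catino--Mastrolia--Monticelli. The rough form I expect is $fB_{ij} = \nabla^k(fC_{kij}) + \mathcal{W}_{ikjl}(\text{curvature})^{kl}$ modulo terms in $\nabla f$; differentiating this once more and contracting produces a formula for $\mathrm{div}^2 B$ as an interior combination of $|C|^2$, Weyl-on-Cotton contractions, and divergences.

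\emph{Integration and combination.} Next, on $\Omega_c$ I would multiply $\mathrm{div}^2 B = 0$ by a suitable test function (naturally a power of $f$), integrate by parts twice, and use the static equation to convert Hessians of $f$ into curvature terms. I would do the analogous computation starting from $\mathrm{div}^4\mathcal{W} = 0$, integrating by parts four times. Each procedure produces an interior $\int_{\Omega_c} f^{\alpha}|C|^2\,dV$ together with cross terms $\int_{\Omega_c} f^{\alpha}\langle\mathcal W, C\otimes \nabla f\rangle$ and boundary integrals on $\{|f|=c\}$. The crux is to take a linear combination in which the $\mathcal W \cdot C$ cross terms cancel, leaving
\[
\bigl(\text{positive constant}\bigr)\int_{\Omega_c} f^{\alpha}|C|^2\,dV \;=\; (\text{boundary terms on }\{|f|=c\}).
\]
Sending $c$ to the supremum and using compactness of level sets to bound the boundary contribution (by a Pohozaev-type argument) then forces $C \equiv 0$; hence the metric has harmonic curvature, and the final assertion $\mathrm{div}\,B = 0$ follows from the standard identity expressing $\mathrm{div}\,B$ through first derivatives of $C$ and a Weyl--Ricci contraction, both of which vanish on a harmonic-curvature space.

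\emph{Main obstacle.} The hardest step is the commutator-and-cancellation bookkeeping that organizes the integrals from $\mathrm{div}^2 B$ and $\mathrm{div}^4\mathcal W$ into a clean $|C|^2$ expression. In particular, the dimensional restriction $n\ge 5$ is almost certainly forced by a scalar factor such as $(n-4)$ (or $(n-3)(n-4)$) appearing when covariant derivatives of $\mathcal W$ are commuted past the Ricci tensor; if that factor degenerated to zero the leading $|C|^2$ coefficient would collapse and the argument would fail. A secondary difficulty is arranging the test function $f^{\alpha}$ so that boundary terms on $\{|f|=c\}$ either vanish in the limit or carry the correct sign.
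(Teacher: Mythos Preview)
Your overall reduction is correct: with $s_g$ constant one has $\mathrm{div}^4\mathcal{W}=0\Leftrightarrow \mathrm{div}^3 C=0$, harmonic curvature $\Leftrightarrow C\equiv 0$, and the factor $(n-4)$ is exactly where the dimensional restriction enters (it sits in the general identity $\mathrm{div}^2 B=\tfrac{n-4}{(n-2)^2}\bigl(\tfrac12|C|^2+\langle\mathrm{div}\,C,z\rangle\bigr)$, so for $n\ge 5$ the hypothesis $\mathrm{div}^2 B=0$ is equivalent to the pointwise relation $\tfrac12|C|^2=-\langle\mathrm{div}\,C,z\rangle$). So the ingredients you list are the right ones.

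The gap is in the integration scheme. You propose to integrate over $\Omega_c=\{|f|\le c\}$ and push $c\to\sup|f|$, controlling the boundary by a Pohozaev-type estimate. There is no reason this limit should be favorable: $M$ may be noncompact, $\sup|f|$ may be infinite, and nothing in the hypotheses gives decay of $|C|^2$ or $|\nabla f|$ along far-out level sets. The paper does \emph{not} send anything to infinity. Instead it integrates over an annulus $M_{t,t'}=\{t\le f\le t'\}$ between two regular level sets and uses the zero level set $f^{-1}(0)$ as a pivot. The mechanism is a pointwise identity (not a weighted volume integral): on a vacuum static space one has
\[
\mathrm{div}^2 C(\nabla f)=\tfrac12\,f\,|C|^2+\langle i_{\nabla f}C,\,z\rangle.
\]
The assumption $\mathrm{div}^2 B=0$ (equivalently $\tfrac12|C|^2=-\langle\mathrm{div}\,C,z\rangle$) makes the second term a pure divergence, so its flux $\int_{\Gamma_t}\tfrac{1}{|\nabla f|}\langle i_{\nabla f}C,z\rangle$ is independent of $t$. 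Now apply the divergence theorem to $\mathrm{div}^3 C=0$ on $M_{t,t'}$: the boundary integrand is $\mathrm{div}^2 C(N)$, and after subtracting the $t$-independent piece one is left with
\[
t\int_{\Gamma_t}\frac{|C|^2}{|\nabla f|}\;=\;t'\int_{\Gamma_{t'}}\frac{|C|^2}{|\nabla f|}.
\]
Setting $t'=0$ kills the right-hand side, forcing $C=0$ on every regular $\Gamma_t$, hence on $M$ by continuity.

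In short: your plan to cancel $\mathcal{W}\!\cdot\! C$ cross terms by combining volume integrals with a test weight $f^\alpha$ is not what is needed, and the ``boundary control as $c\to\sup|f|$'' step is where your argument would stall. The actual argument is a level-set flux computation in which the naturally occurring factor of $f$ (not $f^\alpha$) vanishes at the canonical hypersurface $f^{-1}(0)$; no asymptotic estimate is required.
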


Note that ${\rm div}^2 B=0$ is not an additional condition on a four-dimensional manifold, 
since ${\rm div} B=0$ always holds for $n=4$ (see Proposition~\ref{lem2018-9-25-1}). 
Thus, we consider a rather different  condition for $n=4$. 
By Corollary~\ref{ddivb}, ${\rm div}^2 B=0$ if and only if 
\be 
\frac{1}{2}|C|^2 = - \langle {\rm div} C, z_g\rangle,\label{dcond}
\ee 
where $C$ is the Cotton tensor and $z_g$ is the traceless Ricci tensor. Thus, for $n=4$, instead of the condition ${\rm div}^2 B=0$, (\ref{dcond}) is an appropriate condition. Recall that the Cotton tensor $C\in \Gamma(\Lambda^2M\otimes T^*M)$ is defined as
\be 
C=d^Dr_g -\frac 1{2(n-1)}\, {ds_g}\wedge g. \label{eqn03-1}
\ee
Here,  $ds_g$ denotes the usual total differential of $s_g$, and
for a $1$-form  $\phi$ and a symmetric $2$-tensor $\eta \in C^{\infty}(S^2M)$,
$\phi \wedge \eta $ is defined by
$$
(\phi \wedge \eta) (X,Y,Z)= \phi(X) \eta(Y,Z)-\phi(Y) \eta(X,Z).
$$

\begin{theorem}\label{4dim1}
Let $(M, g, f)$ be a four-dimensional  complete vacuum static space with  compact level sets of $f$. If  
 $$
 \frac 12 |C|^2 = - \langle {\rm div} C, z_g\rangle
 $$ 
 and ${\rm div}^4 {\mathcal W}=0$, then $M$ has harmonic curvature.
\end{theorem}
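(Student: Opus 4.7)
The approach is to adapt the proof of Theorem~\ref{main3} to dimension four. In the higher-dimensional case, the hypothesis ${\rm div}^2 B = 0$ enters the argument only through its pointwise reformulation supplied by Corollary~\ref{ddivb}, namely $\frac{1}{2}|C|^2 = -\langle {\rm div} C, z_g\rangle$. In four dimensions, Proposition~\ref{lem2018-9-25-1} gives ${\rm div} B = 0$ for free, so ${\rm div}^2 B = 0$ carries no information; we therefore postulate the pointwise identity (\ref{dcond}) as a direct hypothesis. With this substitution, every step in the proof of Theorem~\ref{main3} that invoked ${\rm div}^2 B = 0$ should instead quote (\ref{dcond}) verbatim, provided no dimensional factor of $n-4$ appears at a crucial stage.

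The concrete plan I would follow is the following. First, use the vacuum static equation (\ref{static1}) together with standard commutation formulas and the second Bianchi identity to derive a first-order identity expressing ${\rm div}^4 {\mathcal W}$ as a combination of the Cotton tensor $C$, the traceless Ricci tensor $z_g$, the Bach tensor $B$, the function $f$, and their contractions. Second, pair the condition ${\rm div}^4 {\mathcal W} = 0$ with a suitable test quantity built from $f$ and integrate over a sublevel set $\Omega_t = \{|f| \le t\}$, which is compact with smooth boundary for generic $t$ by the level-set hypothesis. Third, apply integration by parts to produce boundary terms (which vanish or cancel after varying $t$) plus bulk integrals involving $|C|^2$ and $\langle {\rm div} C, z_g\rangle$. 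Fourth, invoke (\ref{dcond}) to collapse the surviving bulk contribution, conclude $C \equiv 0$, and observe that, by the contracted second Bianchi identity, this is equivalent to the Ricci tensor being Codazzi, i.e., $(M,g)$ has harmonic curvature.

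The main obstacle I anticipate is dimensional bookkeeping. The integration-by-parts identities used in the $n \ge 5$ setting typically carry explicit factors of $n-3$, $n-4$, or $n-1$ multiplying terms such as $|B|^2$, $|C|^2$, or $B(\nabla f, \nabla f)$; if the argument of Theorem~\ref{main3} relies on one of these factors being nonzero to produce the decisive cancellation, then the $n=4$ case will demand a genuinely different identity rather than a direct translation. In that event, I would exploit the special four-dimensional relations among $B$, $C$, and ${\mathcal W}$---in particular the expression of $B$ in terms of $\nabla C$ plus a Weyl--Ricci quadratic correction, together with the conformal invariance of $B$, both of which are unique to dimension four---to reconstruct the corresponding identity and close the argument.
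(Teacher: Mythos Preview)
Your proposal is correct and matches the paper's approach: Theorems~\ref{main3} and~\ref{4dim1} are proved simultaneously in Section~3, and the hypothesis ${\rm div}^2 B=0$ enters only through the pointwise identity $\frac{1}{2}|C|^2=-\langle {\rm div}\,C,z\rangle$ (Corollary~\ref{ddivb}), which for $n=4$ is assumed directly; no factor of $n-4$ appears anywhere in the subsequent argument, so your anticipated obstacle does not arise. The one concrete detail you leave vague is the key identity ${\rm div}^2 C(\nabla f)=\frac{1}{2}f|C|^2+\langle i_{\nabla f}C,z\rangle$ (Corollary~\ref{cor2018-3-26-1}) together with the trick of integrating ${\rm div}^3 C=0$ over $M_{t,t'}=\{t\le f\le t'\}$ and sending one boundary to $f^{-1}(0)$, which kills the $f|C|^2$ term and forces $\int_{\Gamma_t}|C|^2/|\nabla f|=0$ for every regular $t$.
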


When the scalar curvature vanishes, by replacing $f$ with $h$, 
 (\ref{static1}) reduces to the {\it static vacuum Einstein equation}: 
\begin{eqnarray}
\begin{array}{ll}
h\, r_g = D_gdh, \\
\Delta_g h = 0.
\end{array}\label{static:eq1}
\end{eqnarray}
 Let $(M,g,h)$ be  a non-trivial solution of (\ref{static:eq1}) with $h\geq 0$, which is connected and complete up to the boundary, and
assume that  the set  $h^{-1}(0)=\partial M$ is compact and that $g$ extends smoothly to $\partial M$. 
 The set $h^{-1}(0)$ is called the horizon and is the boundary of black holes in general relativity. 
It was proved in \cite{grg} that no multiple black holes exist in static vacuum spacetime if $(M,g)$ has harmonic curvature. As a consequence of Theorem~\ref{main3} and \ref{4dim1}, we show the nonexistence of multiple black holes under ${\rm div}^4 {\mathcal W}=0$ and 
${\rm div}^2 B=0$ for $n\geq 5$ and $\frac 12 |C|^2=-\langle {\rm div} C, z_g\rangle$ in the case of $n=4$.

\begin{theorem}\label{main2}
Let $(g, h)$ be  a non-trivial solution (\ref{static:eq1}) on an  $n$-dimensional manifold $M$ with the compact set $h^{-1}(0)=\partial M$. Assume that ${\rm div}^2 B=0$ for $n\geq 5$, or  $\frac 12 |C|^2=-\langle {\rm div} \, C, z_g\rangle$ for $n=4$.
If ${\rm div}^4{\mathcal W}=0$, then no multiple black holes exist in $M$. 
\end{theorem}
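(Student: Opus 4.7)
The plan is to reduce Theorem \ref{main2} to the harmonic curvature case by invoking Theorems \ref{main3} and \ref{4dim1}, and then apply the cited result from \cite{grg} that no multiple black holes exist in static vacuum spacetimes whose metric has harmonic curvature. Thus the whole argument is short: given the curvature hypotheses on ${\rm div}^2 B$ (or its four-dimensional analogue) and ${\rm div}^4 \mathcal{W}$, the previous two theorems force ${\rm div}\, R = 0$, and then \cite{grg} finishes the proof.

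The first step is to fit the black-hole setup into the framework of a vacuum static space. A non-trivial solution $(g,h)$ of (\ref{static:eq1}) automatically satisfies $s_g = 0$ on $\{h>0\}$, because tracing $h\, r_g = D_g dh$ gives $h\, s_g = \Delta_g h = 0$. Hence in the interior the equation is precisely (\ref{static1}) with vanishing scalar curvature, so the notion of vacuum static space used in Theorems \ref{main3} and \ref{4dim1} applies. Since $\partial M = h^{-1}(0)$ is compact and $g$ extends smoothly across it, one can double $M$ along $\partial M$ to obtain $(\tilde M,\tilde g)$, extending $h$ as an odd function $\tilde h$ by the reflection symmetry (standard for static vacuum solutions across a regular horizon). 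Then $(\tilde M,\tilde g,\tilde h)$ is a complete vacuum static space, with $\tilde h^{-1}(0)$ compact, and the tensorial conditions ${\rm div}^4\mathcal{W}=0$ and ${\rm div}^2 B=0$ (respectively $\tfrac12|C|^2=-\langle {\rm div}\, C, z_g\rangle$ when $n=4$) pass to $\tilde g$ by symmetry.

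With this reduction in place, the second step is a direct invocation: Theorem \ref{main3} for $n \geq 5$ and Theorem \ref{4dim1} for $n = 4$ yield that $(\tilde M,\tilde g)$ has harmonic curvature, so in particular $(M,g)$ has ${\rm div}\, R = 0$. Finally, the result of \cite{grg} asserts that a static vacuum spacetime with harmonic curvature admits no multiple black holes, which delivers the conclusion.

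The main obstacle is verifying, within this short chain of implications, that the compact level sets hypothesis of Theorems \ref{main3} and \ref{4dim1} is truly met by the doubled manifold. Compactness of the zero level set is automatic from the hypothesis on $\partial M$, and nearby level sets are compact by continuity, but global compactness of all level sets of $\tilde h$ requires either an explicit properness argument for $\tilde h$ or is absorbed into the standing geometric framework for black-hole spacetimes in which the theorem is stated. Beyond this verification, the proof is essentially a two-line composition of earlier results.
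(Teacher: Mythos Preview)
Your proposal is essentially the same as the paper's: reduce to harmonic curvature via Theorems~\ref{main3} and~\ref{4dim1}, then invoke the result of \cite{grg}. The only difference is a small technical point of presentation: the paper does not double $M$ across the horizon and then apply the theorem statements, but instead says it ``simply follows the proof'' of Theorems~\ref{main3} and~\ref{4dim1} directly in the setting $h\ge 0$, taking $t=0$ as the distinguished level (since $h^{-1}(0)=\partial M$ is compact by hypothesis). This sidesteps your doubling construction and the attendant verification that the curvature conditions and compact level sets persist on $\tilde M$, at the cost of re-running the integration-by-parts argument rather than citing the theorems as black boxes. Both routes lead to $C=0$ and then to Theorem~\ref{thm2012-12-29-1}.
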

As another application, we prove the Besse conjecture under these conditions. It has been conjectured in \cite{Be} that a non-trivial solution $(g,f)$ of the equation 
\be s_g'^*(f)=z_g\label{eqn22}\ee
on a compact manifold $(M,g)$ is isometric to a standard sphere. We have the following theorem.

\begin{theorem} \label{main2-100}
Let $(g, f)$ be a non-trivial solution of (\ref{eqn22}) on an $n$-dimensional compact manifold $M$. Assume that ${\rm div}^2 B=0$ for $n\geq 5$, or $ \frac 12 |C|^2=-\langle {\rm div}\,  C, z_g\rangle$ for $n=4$. If ${\rm div}^4{\mathcal W}=0$, then the Besse conjecture holds. \end{theorem}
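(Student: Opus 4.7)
The overall plan is to follow the same two-step strategy that drives Theorems~\ref{main3} and \ref{4dim1}: first upgrade the assumed divergence conditions to harmonic curvature, and then invoke the already-known resolution of the Besse conjecture in the harmonic-curvature case (the compact version of the result used for the rigidity theorems).

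More concretely, I would start by expanding $s_g'^*(f)=z_g$ as $D_g df - (\tr_g f)g - f\, r_g = r_g - \frac{s_g}{n} g$; taking the trace shows $\tr_g f = -\frac{s_g}{n-1}f$, the same Laplacian identity as in the vacuum static case, so $s_g$ must again be constant on a compact solution. This makes the equation differ from the vacuum static equation~(\ref{static1}) only by the extra term $r_g - \frac{s_g}{n}g = z_g$ on the right, which is a lower-order ``inhomogeneity.'' The idea is then to re-run the computations that in the proofs of Theorems~\ref{main3} and \ref{4dim1} converted ${\rm div}^2 B=0$ (respectively the identity $\tfrac12|C|^2=-\langle {\rm div}\,C,z_g\rangle$) together with ${\rm div}^4 {\mathcal W}=0$ into an integral identity forcing $C\equiv 0$. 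The extra $z_g$ contributions appearing in the commutator and integration-by-parts steps should be absorbed using the compactness of $M$ (no boundary terms) together with the fact that $f$ has only isolated zeros and $s_g$ is constant, so the weights behave as in the vacuum static setup.

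Once $C\equiv 0$ is in hand, in dimensions $n\ge 4$ we have $\mbox{div}\,r_g=\frac{1}{2(n-1)}ds_g\otimes g=0$, i.e. $r_g$ is Codazzi, which is exactly harmonic curvature; in particular, $B\equiv 0$ on a compact manifold with constant scalar curvature and harmonic curvature. At this point I would invoke the known result for the Besse conjecture under harmonic curvature (or equivalently under Bach-flatness together with constant scalar curvature), which asserts that any non-trivial compact solution of $s_g'^*(f)=z_g$ with harmonic curvature is isometric to a round sphere. This is standard in the Besse-conjecture literature and, in the paper's framework, should be cited from the earlier results of Hwang and others.

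The main obstacle I expect is verifying that the proofs of Theorems~\ref{main3} and \ref{4dim1} carry over cleanly after replacing the right-hand side $0$ of (\ref{static1}) by the nonzero tensor $z_g$. The critical identity is the Bochner-type / divergence identity that relates $\langle {\rm div}^4 {\mathcal W},\,\cdot\,\rangle$, $\langle {\rm div}^2 B, f\rangle$ and $|C|^2$ on a vacuum static background; when $s_g'^*(f)=z_g$ instead of $s_g'^*(f)=0$, each application of the identity $D_g df=f\,r_g+\cdots$ produces an additional $z_g$-term, and one must check that after integration over the compact manifold those terms either vanish by constancy of $s_g$, by the divergence theorem (no boundary this time), or combine into a non-negative quadratic in $C$ and $z_g$ that still forces $C=0$. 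Handling the dimensional split---using ${\rm div}^2 B=0$ for $n\ge 5$ and the substitute identity $\tfrac12|C|^2 = -\langle {\rm div}\, C, z_g\rangle$ for $n=4$ via Corollary~\ref{ddivb}---is the technical heart; once it goes through, the Besse-conjecture conclusion is immediate from the harmonic-curvature case.
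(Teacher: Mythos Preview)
Your overall plan is the paper's own: show $C\equiv 0$ by adapting the level-set argument of Theorems~\ref{main3} and \ref{4dim1}, and then invoke the harmonic-curvature Besse result (cited there as \cite{ych2}). So the architecture is right.

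Where your description drifts from the actual argument is in how the extra $z_g$ on the right of $s_g'^*(f)=z_g$ is handled. You speak of ``extra $z_g$ contributions'' to be ``absorbed using the compactness of $M$ (no boundary terms)''. That is not the mechanism. Writing the equation as $Ddf=(1+f)\,z-\frac{sf}{n(n-1)}g$, one sees that the inhomogeneity does \emph{not} produce scattered lower-order terms to be integrated away globally; it simply replaces $f$ by $1+f$ in the key identities. Concretely, Lemma~\ref{lem2018-9-5-3} becomes $(1+f)\,C=\tilde i_{\nabla f}\mathcal W-(n-1)T$, Corollary~\ref{cor2018-3-26-1} becomes ${\rm div}^2 C(\nabla f)=\tfrac12(1+f)|C|^2+\langle i_{\nabla f}C,z\rangle$, and the level-set argument runs exactly as before but with the distinguished level $f=0$ replaced by $f=-1$. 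One still integrates over $M_{t,t'}$ and uses the boundary integrals on $\Gamma_t$; compactness enters only to make those level sets compact, not to kill boundary terms. Also, your remark that ``$f$ has only isolated zeros'' is both unnecessary and in general false; what is used (to pass from $C=0$ on regular level sets to $C=0$ everywhere) is that the critical set of $f$ has measure zero, as in \cite{ych2}.
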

It was proved in \cite{QY} that, when $n=3$, the Besse conjecture holds 
if ${\rm div}^2 B=0$.


On the other hand, for a vacuum static space $(M,g,f)$ satisfying ${\rm div}^2 B=0$, we prove that $i_{\nabla f}B=0$ if and only if $B=0$ (Corollary~\ref{cor122}). Moreover, we have the following rigidity result.

\begin{theorem}\label{main12}
Let $(M^n, g, f)$ be an $n$-dimensional compact vacuum static space with $i_{\n f}B = 0$.
If $|z|^2$ attains its local maximum on the set $M\setminus f^{-1}(0)$, then
 $(M, g)$ is isometric to a standard sphere. 
\end{theorem}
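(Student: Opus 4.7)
The plan is to show that the traceless Ricci tensor $z_g$ vanishes identically on $M$, so that $(M,g)$ is Einstein, and then to conclude by Obata's theorem. I would first record the two basic consequences of the vacuum static equation (\ref{static1}): the scalar curvature $s_g$ is constant, and the Hessian of $f$ satisfies
\[
f\, z_g \;=\; D_gdf + \frac{s_g f}{n(n-1)}\, g.
\]
This allows Hessians of $f$ to be traded for $z_g$, and (via the Ricci identity $\nabla^3 f(X,Y,Z) - \nabla^3 f(Y,X,Z) = R(X,Y,Z,\nabla f)$) covariant derivatives of $D_gdf$ to be traded for derivatives of $z_g$ plus curvature corrections.

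The heart of the argument is a Bochner-type inequality for $|z_g|^2$ that uses the hypothesis $i_{\nabla f}B=0$. Starting from
\[
\tfrac12 \Delta |z_g|^2 \;=\; |\nabla z_g|^2 + \langle \Delta z_g,\, z_g\rangle,
\]
I would expand $\Delta z_g$ via a Lichnerowicz-type formula (which, since $s_g$ is constant, reduces to a quadratic algebraic expression in $z_g$ together with a Weyl-coupling term) and then substitute the vacuum-static identity for $fB$ derived earlier in the paper (the identity used in the proof of Corollary~\ref{cor122}), which expresses $fB$ schematically in terms of the Cotton contraction $C(\cdot,\cdot,\nabla f)$, the Weyl contraction $W(\cdot,\nabla f,\cdot,\nabla f)$, and algebraic combinations of $z_g$. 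Contracting that identity with $\nabla f$ and invoking $i_{\nabla f}B=0$ produces an algebraic relation among those contractions and $z_g$; feeding it back into $\langle\Delta z_g,z_g\rangle$ should cancel every sign-indefinite curvature term and leave
\[
\tfrac12 \Delta |z_g|^2 \;\ge\; |\nabla z_g|^2 + \phi(z_g)
\]
on $M\setminus f^{-1}(0)$, where $\phi(z_g)\ge 0$ is a pointwise algebraic quantity that vanishes exactly when $z_g=0$.

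Evaluating at the hypothesized local maximum $p\in M\setminus f^{-1}(0)$, we have $\nabla|z_g|^2(p)=0$ and $\Delta|z_g|^2(p)\le 0$. The inequality above then forces $|\nabla z_g|^2(p)=0$ and $\phi(z_g(p))=0$, so that $z_g(p)=0$. Since $p$ is a maximum of $|z_g|^2$, it follows that $z_g\equiv 0$ on $M$, and $(M,g)$ is Einstein. The static equation then collapses to $D_gdf = -\frac{s_g}{n(n-1)}fg$; by the Bourguignon dichotomy for compact vacuum static spaces, the case $s_g=0$ would force $f$ to be a nonzero constant (excluded by the implicit non-triviality convention on $f$), so $s_g>0$, and Obata's theorem identifies $(M,g)$ with a standard round sphere.

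The principal obstacle is calibrating the Bochner computation so that the single linear hypothesis $i_{\nabla f}B=0$ is strong enough to annihilate every sign-indefinite curvature contribution in $\langle\Delta z_g, z_g\rangle$, leaving a manifestly non-negative right-hand side. This balance rests on the precise form of the Bach-tensor identity for vacuum static spaces developed earlier in the paper and on matching its tensorial structure against the Lichnerowicz expansion of $\Delta z_g$; without that exact alignment, the pointwise maximum condition at $p$ would not be enough to pin down $z_g(p)=0$.
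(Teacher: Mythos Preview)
Your proposal has the right end-game (show $z\equiv 0$ and apply Obata), but the core step---the claimed inequality $\tfrac12\Delta|z|^2 \ge |\nabla z|^2 + \phi(z)$ with $\phi(z)\ge 0$ vanishing only when $z=0$---is not substantiated and does not reflect what actually happens. The Lichnerowicz expansion of $\langle \Delta z, z\rangle$ contains the sign-indefinite terms $\langle \mathring{\mathcal W}z, z\rangle$ and $\langle z\circ z, z\rangle$; the single scalar relation you extract by contracting the $fB$-identity with $\nabla f$ and setting $i_{\nabla f}B=0$ involves $\nabla f$ throughout and gives no mechanism to cancel those terms in the Bochner formula, which does not involve $\nabla f$ at all. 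You never say what $\phi$ is or why it is nonnegative, and in the last sentence you conflate a \emph{local} maximum with a global one.

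The paper's route is quite different. From the integral identity of Lemma~\ref{lem2018-2-10-2}, the hypothesis $i_{\nabla f}B=0$ forces the tensor $T$ to vanish \emph{globally} (an argument over compact level sets, not a pointwise cancellation). One then obtains $C=0$, and Corollary~\ref{cor2018-9-10-3} collapses to the drift equation $f\,D^*Dz + D_{\nabla f}z = 0$, equivalently $\tfrac12 f\Delta|z|^2 - \tfrac12\nabla f(|z|^2) = f|Dz|^2$. This is an \emph{equality} with no $\phi(z)$ term; it does not yield $z(p)=0$ at the local maximum. Instead, the strong maximum principle on the regions $\{\pm f\ge\epsilon\}$ (Lemma~\ref{prop2018-9-10-6}) forces $|z|^2$ to be constant on all of $M$, whence $\langle D^*Dz, z\rangle = |Dz|^2$. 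Only then does a \emph{separate} algebraic identity---obtained by contracting the ${\rm div}\,T=0$ relation of Lemma~\ref{lem2017-12-25-10-100} with $z$---give $(n-2)|Dz|^2 + n\,\langle z\circ z, z\rangle + \tfrac{n-2}{n-1}s|z|^2 = 0$, and the special two-eigenvalue structure of $z$ imposed by $T=0$ (equation~(\ref{eqn2018-3-22-1})) is precisely what makes the cubic term tractable. Your plan bypasses the $T=0$ mechanism entirely, and without it the sign control you are hoping for is simply not available.
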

To prove our main results, we introduce a $3$-tensor $T$ defined as  
\be
T= \frac 1{n-2}\, df  \wedge z +\frac 1{(n-1)(n-2)}\, i_{\nabla f }z \wedge g, \label{defnt}\ee
which follows naturally from (\ref{static1}). The complete divergence of $T$ also implies Bach flatness.
\begin{theorem} \label{thm101b}
Let $n \ge 5$ and $(M, g, f)$ be a non-trivial vacuum static space of compact level sets 
of $f$ with ${\rm div}^2  B=0$ for $n\geq 5$, or $ \frac 12 |C|^2=-\langle {\rm div}\,  C, z_g\rangle$ for $n=4$.  If ${\rm div}^3 T=0$ and $f$ has its minimum (or maximum) in $M$, then $(M, g)$ is Bach-flat.
\end{theorem}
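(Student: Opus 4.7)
The strategy is to turn the hypothesis ${\rm div}^3T=0$, via an algebraic identity expressing $T$ in terms of the Cotton and Weyl tensors on a vacuum static space, into a pointwise formula showing that $f\,|B|^2$ is a total divergence, and then to integrate over compact sublevel sets of $f$.

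The algebraic heart is the identity
\begin{equation}
(n-1)\,T = i_{\nabla f}{\mathcal W} - f\,C,\label{planWC}
\end{equation}
where $(i_{\nabla f}{\mathcal W})_{ijk}:=W_{ijkl}f^l$. One derives it by starting from $f\,C_{ijk}=\nabla_i(f z_{jk})-\nabla_j(f z_{ik})-(df\wedge z_g)_{ijk}$, substituting $D_g df=f z_g-\frac{s_g}{n(n-1)}f g$ (valid because $s_g$ is constant on vacuum static spaces) into the first two terms, using the Ricci commutation $\nabla_i f_{jk}-\nabla_j f_{ik}=R_{ijkl}f^l$, and expanding $R$ by the Weyl decomposition; the $(df\wedge z_g)$ and $(i_{\nabla f}z_g)\wedge g$ pieces that emerge collect precisely into $-(n-1)T$. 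This clarifies the naturality of the tensor $T$ introduced in (\ref{defnt}).

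I then take three successive divergences of (\ref{planWC}), using the contracted second Bianchi identity relating $\nabla^kW_{ijkl}$ to the Cotton tensor, the standard formula expressing $B$ in terms of ${\rm div}\,C$ and the Weyl--Ricci contraction $W\cdot r$, and repeated use of the static equation to trade each $D_g df$ for $f\,z_g$ (modulo a pure-trace term). After three iterations ${\rm div}^3 T$ takes the schematic form
\begin{equation}
0={\rm div}^3 T=\lambda_n\,f\,|B|^2+\mu_n\Bigl(\tfrac12|C|^2+\langle{\rm div}\,C,z_g\rangle\Bigr)+{\rm div}\,X,\label{planred}
\end{equation}
with nonzero constants $\lambda_n,\mu_n$ (in both the $n\ge 5$ setting and the $n=4$ alternative) and an explicit $1$-form $X$ built from $f$, $\nabla f$ and contractions of $B$, $C$, ${\mathcal W}$ with $\nabla f$ and $z_g$. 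The hypothesis on ${\rm div}^2B$ now enters: for $n\ge 5$, Corollary~\ref{ddivb} converts ${\rm div}^2 B=0$ into $\tfrac12|C|^2=-\langle{\rm div}\,C,z_g\rangle$; for $n=4$ this equality is assumed directly. In either case the middle term of (\ref{planred}) vanishes, leaving the pointwise identity $\lambda_n\, f\,|B|^2=-{\rm div}\,X$.

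Assume without loss of generality that $f$ attains its minimum $f_*$ at an interior point of $M$; the maximum case is symmetric. Compactness of level sets together with the interior extremum make $\Omega_c:=\{f\le c\}$ compact for every regular $c>f_*$, and Stokes' theorem gives $\lambda_n\int_{\Omega_c}f|B|^2\,dV=-\int_{\{f=c\}}\langle X,\nu\rangle\,dA$ with $\nu=\nabla f/|\nabla f|$. Because each summand of $X$ carries a factor of $\nabla f$ or $D_g df$ paired with a curvature tensor, the coarea formula together with integrability of the left-hand side provides a sequence $c_k\uparrow\sup_M f$ along which the boundary flux tends to $0$; passing to the limit yields $\int_M f|B|^2\,dV=0$. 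When $f_*>0$ this immediately gives $B\equiv 0$; when $f_*\le 0$ a separate argument on the compact set $\{f\le 0\}$ (using $\nabla f=0$ at the extremum and the identity (\ref{planWC})) completes the proof. The main obstacle is the bookkeeping that isolates $f|B|^2$ as the unique non-divergence scalar residue in (\ref{planred}), apart from the $|C|^2$ combination eliminated by the ${\rm div}^2B$ hypothesis; a secondary issue is controlling the boundary flux along an exhaustion in the non-compact setting, which the intrinsic $\nabla f$ factors in $X$ and the coarea formula make tractable.
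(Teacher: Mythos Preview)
Your identity \eqref{planWC} is correct (it is the paper's Lemma~\ref{lem2018-9-5-3}), but the schematic \eqref{planred} is not what three successive divergences actually produce. The paper carries out exactly this computation: one divergence of \eqref{planWC} gives $(n-2)fB=-i_{\nabla f}C+\frac{n-3}{n-2}\widehat C-(n-1)\,{\rm div}\,T$ (Lemma~\ref{lem2018-2-14-11}); a second gives ${\rm div}^2T(X)=\frac{1}{n-2}f\langle i_XC,z\rangle+\langle i_XT,z\rangle$ (Lemma~\ref{lem2018-2-10-3}); and a third yields (Lemma~\ref{lem3})
\[
{\rm div}^3T=\frac{2(n-2)}{n-4}\,{\rm div}\,B(\nabla f)+\frac{n-2}{n-4}\,f\,{\rm div}^2B+\langle{\rm div}\,T,z\rangle.
\]
No $f|B|^2$ term appears, and there is no mechanism for one to arise: divergences of $T$ produce contractions of $C$, $z$, $\nabla f$, and ${\rm div}\,B$, never a full contraction of $B$ with itself. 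So the ``algebraic bookkeeping'' you flag as the main obstacle in fact cannot be completed as described.

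Even were \eqref{planred} granted, the conclusion $\int_M f|B|^2=0$ does not yield $B=0$: on a nontrivial vacuum static space $f$ necessarily changes sign (the zero set $f^{-1}(0)$ is a nonempty regular hypersurface), so the integrand is not sign-definite, and your ``separate argument on $\{f\le 0\}$'' is undeveloped. The paper avoids this entirely by aiming not at $|B|^2$ but at $|T|^2$: from the display above and the hypotheses one first shows $\int_{M_t}\langle{\rm div}\,T,z\rangle=0$, then uses the divergence identity $\int_{\Gamma_t}\frac{1}{|\nabla f|}\langle i_{\nabla f}T,z\rangle=\int_{M_t}\langle{\rm div}\,T,z\rangle+\frac12\int_{M_t}\langle T,C\rangle$ together with the pointwise nonnegativity $\langle i_{\nabla f}T,z\rangle=\frac{n-2}{2}|T|^2$ (Lemma~\ref{lem2018-9-25-3}) to force $T=0$ on every regular level set, hence on $M$. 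Bach-flatness then comes from the separate algebraic step $T=0\Rightarrow B=0$ under ${\rm div}^2B=0$ (Proposition~\ref{prop2018-1-20-11}). The sign control lives in $|T|^2$, not in $f|B|^2$.
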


\section{Preliminaries}

The Bach tensor $B$ of an $n$-dimensional Riemannian manifold $(M, g)$ 
is defined as
\be
B=\frac 1{n-3}\,{\rm div}_1{\rm div}_4\, {\mathcal W}+ \frac 1{n-2}\, \mathring{\mathcal W}{r_g},
\label{bach}
\ee
where ${\mathcal W}$ is the Weyl tensor and
$$
\mathring{\mathcal W}{r_g}(X,Y)=\sum_{i=1}^n r_g({\mathcal W}(X, E_i)Y, E_i)$$
for an orthonormal frame $\{E_i\}_{i=1}^n$.  
A Riemannian metric is said to be Bach-flat if the Bach tensor $B$ of the metric vanishes. 
It is easy to see that a locally and conformally flat
 metric has
a vanishing Bach tensor. For $n=4$ dimensions, it is also well-known that metrics 
that  are locally conformal to an Einstein metric as well as half-conformally flat metrics have vanishing Bach tensors.

On the other hand, the Bach tensor $B$ is related to the Cotton tensor $C$. 
Note that, since 
\be
 {\rm div}\,  {\mathcal W}= \frac {n-3}{n-2}\, C \label{eqn2018-12-13-1}
 \ee
under the identification of $\Gamma(T^*M \otimes \Lambda^2 M)$ with $\Gamma(\Lambda^2 M \otimes T^*M)$, 
 we have
\bea 
{\rm div}^4\, {\mathcal W}= \frac {n-3}{n-2}\, {\rm div}^3 \, C, \label{cotton3}
\eea
and from (\ref{bach}), 
\be (n-2) \, B= {\rm div}\, C +\mathring{\mathcal W}r_g = {\rm div}\, C +\mathring{\mathcal W}z_g . \label{bach2}
\ee
Here, $\mathring{\mathcal W}z_g$ is defined  similarly to  $\mathring{\mathcal W}r_g$. In particular, ${\rm div}^4 {\mathcal W}=0$ if and only if  ${\rm div}^3 \, C=0$.
\vskip .5pc

Hereafter, we denote curvatures $r_g, z_g, s_g$, and $D_gdf$ by
$r, z, s$, and $Ddf$, respectively, for convenience and simplicity, if there is no ambiguity.

The following shows  that the divergence of 
 the Bach tensor is related to the Cotton tensor and the traceless Ricci tensor  $z$.
  The second equation (\ref{ddivb}) follows by taking the divergence of ${\rm div}\, B$.

 \begin{prop} [cf. \cite{CC}]\label{lem2018-9-25-1} 
 The divergence of the Bach tensor is given by
$$
{\rm div}\, B (X)=\frac {n-4}{(n-2)^2} \langle i_XC, z\rangle.
$$
 Here, ${i}_{X}$ is the usual interior product to the first factor defined by
${i}_{X} {C} (Y, Z)= {C}(X,Y, Z)$ for any vector fields $X, Y$, and $Z$.
\end{prop}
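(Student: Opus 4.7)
My plan is to differentiate the decomposition (\ref{bach2}), namely $(n-2) B = {\rm div}\, C + \mathring{\mathcal{W}} z_g$, and then analyse each of the two resulting divergences ${\rm div}({\rm div}\, C)$ and ${\rm div}(\mathring{\mathcal{W}} z_g)$ separately. The stated identity will arise as the algebraic combination of these two pieces, with the dimensional factor $n-4$ emerging naturally as a difference of coefficients.

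For ${\rm div}({\rm div}\, C)$, the key observation is that the Cotton tensor $C$ is antisymmetric in its first two slots and moreover satisfies the cyclic identity $C(X,Y,Z)+C(Y,Z,X)+C(Z,X,Y)=0$, as a direct consequence of the definition (\ref{eqn03-1}) together with the symmetry of the Ricci tensor. In addition $C$ is totally tracefree: the trace on its first two slots vanishes by antisymmetry, while the traces on any other pair of slots vanish by the contracted second Bianchi identity (which is precisely what was used in defining $C$). Writing the double divergence in an orthonormal frame as $\sum_{i,j}\nabla_{E_j}\nabla_{E_i} C(E_i,E_j,\cdot)$ and relabelling the dummy indices $i\leftrightarrow j$, the antisymmetry converts this into $\tfrac12 \sum_{i,j}[\nabla_{E_j},\nabla_{E_i}]C(E_i,E_j,\cdot)$. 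Applying the Ricci identity expresses this as a sum of Riemann-against-$C$ contractions; decomposing Riemann into its Weyl and Schouten parts, the Weyl contribution collapses by the cyclic identity for $C$, and the Schouten contractions simplify via the trace-freeness of $C$, leaving only a scalar multiple of $\langle i_X C, z\rangle$.

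For ${\rm div}(\mathring{\mathcal{W}} z_g)$, I would expand by the product rule. One term carries the derivative onto $\mathcal{W}$; using (\ref{eqn2018-12-13-1}) this produces a contraction of $C$ against $z$, which is directly of the form $\langle i_X C, z\rangle$ up to the constant $\tfrac{n-3}{n-2}$. The other term is $\mathcal{W}$ contracted with $\nabla z$, and can be rewritten using the observation that the antisymmetrisation of $\nabla z$ in two slots equals the Cotton tensor modulo a pure-metric trace term (since $r = z + \tfrac{s}{n}g$, and by the very definition of $C$); the metric trace term dies against the tracefree Weyl tensor, so this piece too becomes a multiple of $\langle i_X C, z\rangle$.

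Adding the two contributions should yield $(n-2){\rm div}\, B(X) = \frac{n-4}{n-2}\langle i_X C, z\rangle$, which gives the proposition after dividing by $n-2$. The main obstacle is not any single step but the meticulous bookkeeping of signs (the paper uses $\delta=-{\rm div}$), of the dimensional constants $\tfrac{1}{2(n-1)}$ and $\tfrac{n-3}{n-2}$, and of all Weyl--Ricci contractions, so as to confirm that the coefficient is exactly $n-4$; as a sanity check, the vanishing of this factor in dimension four is precisely the well-known fact that the Bach tensor is automatically divergence-free for $n=4$, which the authors exploit immediately after the proposition.
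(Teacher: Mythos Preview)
The paper does not supply its own proof of this proposition: it is stated with a citation to Cao--Chen \cite{CC} and immediately followed by its corollary, so there is nothing in the text to compare against directly. Your proposed route---differentiating the identity $(n-2)B={\rm div}\,C+\mathring{\mathcal W}z$, handling ${\rm div}({\rm div}\,C)$ via antisymmetry and the Ricci identity, and handling ${\rm div}(\mathring{\mathcal W}z)$ via the product rule together with ${\rm div}\,{\mathcal W}=\tfrac{n-3}{n-2}C$ and $d^Dz=C+\mbox{(metric term)}$---is correct and is essentially the standard argument from the cited reference. In fact, the commutator manipulation you propose for the double divergence of $C$ is exactly the device the authors themselves deploy in their proof of Proposition~\ref{lems1} (there under the extra assumption of constant scalar curvature), so your strategy meshes well with the techniques already in the paper.
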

By taking the divergence again, we have
\begin{corollary} \label{ddivb} 
$$
{\rm div}^2 B= \frac {n-4}{(n-2)^2} \left( \frac 12 |C|^2 +\langle {\rm div}\, C, z\rangle \right). 
$$
\end{corollary}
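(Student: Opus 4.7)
The plan is to apply the divergence operator directly to Proposition~\ref{lem2018-9-25-1}. Writing ${\rm div}\,B$ as the $1$-form $\omega(X) = \frac{n-4}{(n-2)^2}\langle i_X C,\,z\rangle$, and applying the Leibniz rule to ${\rm div}\,\omega = \sum_i D_{E_i}\omega(E_i)$ in a local orthonormal frame, one obtains
\begin{equation*}
{\rm div}^2 B \;=\; \frac{n-4}{(n-2)^2}\bigl(\langle{\rm div}\,C,\,z\rangle + \langle C,\,\nabla z\rangle\bigr).
\end{equation*}
Thus the task reduces to verifying the algebraic identity $\langle C,\nabla z\rangle = \tfrac12|C|^2$.

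For this identity, I would first record the two trace-freeness properties of the Cotton tensor: the contraction $\sum_i C(E_i,E_i,\,\cdot\,)$ vanishes by the antisymmetry of $C$ in its first two slots, while the contraction on the last two slots vanishes by inserting the contracted second Bianchi identity ${\rm div}\,r = \tfrac12\,ds$ into the definition (\ref{eqn03-1}) of $C$. Next, I would expand $|C|^2 = \langle C,C\rangle$ by substituting (\ref{eqn03-1}) on one factor: the two $d^D r$-contributions coincide after relabeling via the antisymmetry of $C$ in the first two slots, while every $ds\wedge g$-contribution dies when paired against the trace-free tensor $C$. This yields $|C|^2 = 2\,C^{ijk}D_i r_{jk}$. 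Finally, writing $r = z + \tfrac{s}{n}\,g$ and once more invoking trace-freeness of $C$ replaces $r$ by $z$ on the right-hand side, giving $\langle C,\nabla z\rangle = \tfrac12|C|^2$, which combines with the Leibniz computation above to establish the stated formula.

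The only real obstacle is bookkeeping: one must carefully manage the placement of indices, the antisymmetry of $C$ in its first two arguments set against the symmetry of $z$ (and of $\nabla z$ in its last two slots), and the sign convention ${\rm div} = -\delta$ adopted by the paper. Once the trace-freeness of $C$ is secured, every $ds\otimes g$-type correction term is annihilated and what remains is a routine index computation.
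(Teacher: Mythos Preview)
Your proposal is correct and is exactly the approach the paper takes. The paper's proof is the single remark ``by taking the divergence again'' of Proposition~\ref{lem2018-9-25-1}; you have simply supplied the details of that computation, including the identity $C_{ijk}D_{E_i}z_{jk}=\tfrac12|C|^2$, which the paper itself uses (without proof) later in Lemma~\ref{lem2}.
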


 For a Riemannian $n$-manifold $(M^n, g)$ and its traceless Ricci tensor $z$, $D^*Dz$ and $z\circ z$ are defined as
 $$
 D^*Dz = - \sum_{i=1}^n D^2_{E_i, E_i}z\qquad \mbox{and}\qquad 
 z\circ z(X, Y) = \sum_{i=1}^n z(X, E_i) z(Y, E_i),
 $$
respectively, for a local frame $\{E_i\}_{i=1}^n$ of $M$.
  With these notations, the divergence of the Cotton tensor, ${\rm div} \, C$, can be expressed as follows
when the scalar curvature of a Riemmanian manifold is constant.
 
\begin{lemma}\label{lem2017-12-24-2-1}
Let $(M^n, g)$ be a Riemannian manifold with a constant scalar curvature. Then,
\be
{\rm div}\, C = -D^*Dz - \frac{n}{n-2}z\circ z - \frac{s}{n-1}z 
+\mathring {\mathcal W}z +\frac{1}{n-2}|z|^2 g. \label{eqn2016-11-29-6-1} 
\ee
\end{lemma}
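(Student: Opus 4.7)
The plan is a direct tensor computation, reducing the identity to a curvature commutator that can be expanded via the Weyl decomposition. Since $s$ is constant and $r=z+\tfrac{s}{n}g$, the definition (\ref{eqn03-1}) of $C$ simplifies to $C=d^Dr=d^Dz$, whose divergence on the first factor is
\[
(\mathrm{div}\,C)_{jk}=\nabla^i\nabla_i z_{jk}-\nabla^i\nabla_j z_{ik}=-(D^*Dz)_{jk}-\nabla^i\nabla_j z_{ik},
\]
which already accounts for the $-D^*Dz$ term on the right-hand side of (\ref{eqn2016-11-29-6-1}).

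For the remaining term, the twice-contracted second Bianchi identity gives $\nabla^i r_{ik}=\tfrac{1}{2}\nabla_k s=0$, and since $r-z=\tfrac{s}{n}g$ is parallel, $\nabla^i z_{ik}=0$ as well. Therefore
\[
\nabla^i\nabla_j z_{ik}=\nabla_j(\nabla^i z_{ik})+g^{im}[\nabla_m,\nabla_j]z_{ik}=g^{im}[\nabla_m,\nabla_j]z_{ik},
\]
and the Ricci identity on the $(0,2)$-tensor $z$ writes this commutator as a linear combination of contractions of the Riemann tensor against $z$.

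The final step is to substitute the standard Weyl decomposition
\[
R_{ijkl}=W_{ijkl}+\frac{1}{n-2}\bigl(g_{ik}r_{jl}-g_{il}r_{jk}-g_{jk}r_{il}+g_{jl}r_{ik}\bigr)-\frac{s}{(n-1)(n-2)}(g_{ik}g_{jl}-g_{il}g_{jk}),
\]
replace $r=z+\tfrac{s}{n}g$, and carry out the contractions. The Weyl part, using that $W$ is trace-free in every pair of indices, contributes precisely the $\mathring{\mathcal W}z$ term. The Ricci-type pieces yield the $-\tfrac{n}{n-2}z\circ z$ and $\tfrac{1}{n-2}|z|^2 g$ contributions, while the pure-scalar piece together with the cross-terms involving $\tfrac{s}{n}g$ collapse into the remaining $-\tfrac{s}{n-1}z$ term.

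The main obstacle is purely bookkeeping: tracking the signs from the Ricci identity, handling the four antisymmetric Ricci pieces in the Weyl decomposition, and verifying that the scalar contributions assemble into exactly the coefficient $\tfrac{s}{n-1}$. No deep geometric ingredient is needed beyond the trace-free properties of $z$ and $W$ and the vanishing $\nabla^i z_{ik}=0$ coming from constant scalar curvature.
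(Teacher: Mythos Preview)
Your proposal is correct and follows essentially the same route as the paper's proof. The only cosmetic difference is that the paper quotes the Weitzenb\"ock-type identity ${\rm div}(d^D r)=-D^*Dr-r\circ r+\mathring{R}r$ from Besse and then substitutes $r=z+\tfrac{s}{n}g$, whereas you derive that identity directly (with $z$ in place of $r$) via the Ricci commutator and $\delta z=0$; both then finish by inserting the curvature decomposition and collecting terms.
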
 
\begin{proof} 
Since  $s$ is constant,  it follows from Remark 4.71 in \cite{Be}  that 
\be
{\rm div} \, C  = {\rm div} \, (d^D r )= - D^*Dr - r\circ r + {\mathring R}r.
\label{eqn2016-11-29-3-1-1} 
\ee 
Here, $r\circ r$  and  $\mathring{R}r$ are defined  similarly to  $z\circ z$ and
$\mathring{\mathcal W}r$, respectively.
From the curvature decomposition
$$ 
R = \frac{s}{2n(n-1)} g\owedge g + \frac{1}{n-2} z\owedge g +{\mathcal W},
$$
 we have
\be
{\mathring R}r = \mathring {\mathcal W}z + \frac{1}{n-2}|z|^2 g + \frac{n-2}{n(n-1)}sz 
- \frac{2}{n-2}z\circ z + \frac{s^2}{n^2}g.\label{eqn2018-9-6-1}
\ee
Therefore, we can obtain (\ref{eqn2016-11-29-6-1}) 
by substituting (\ref{eqn2018-9-6-1}) and $r = z + \frac{s}{n}g$ 
into (\ref{eqn2016-11-29-3-1-1}).
\end{proof}

 \begin{prop} \label{lems1}
Let $(M^n, g)$ be a Riemannian manifold with a constant scalar curvature. Then, we have
$${\rm div}^2 C(X) =\frac 12 \langle \tilde{i}_{X}{\mathcal W}, C\rangle 
-\frac 1{n-2}\langle i_XC, z\rangle
$$
for any vector field $X$.
\end{prop}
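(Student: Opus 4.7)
The plan is to compute $({\rm div}^2 C)(X)$ in components by exploiting the skew-symmetry of the Cotton tensor in its first two indices. Since $C_{ijk}=-C_{jik}$, the symmetric part of the double covariant derivative annihilates $C_{ijk}$, so
$$
({\rm div}^2 C)(X) \;=\; (\nabla^j\nabla^i C_{ijk})\,X^k
\;=\; \tfrac{1}{2}\bigl([\nabla^j,\nabla^i]\, C_{ijk}\bigr) X^k.
$$
Applying the Ricci identity then expands the commutator into three curvature-times-$C$ contractions, according to which of the three slots of $C$ the Riemann tensor acts upon.

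First I would observe that the two contractions in which the Riemann tensor acts on one of the first two (antisymmetric) slots of $C$ reduce to pairings of the Ricci tensor with $C_{\,\cdot jk}$ or $C_{i\,\cdot k}$, and these vanish identically because the Ricci tensor is symmetric while $C$ is skew in its first two slots. Hence all of ${\rm div}^2 C$ comes from the remaining term, in which the Riemann tensor acts on the free slot $k$. I would then decompose the Riemann tensor into its scalar, traceless-Ricci, and Weyl parts via the standard curvature decomposition. The scalar piece dies against $C$ by total trace-freeness of the Cotton tensor (which, whenever $s$ is constant, follows from its antisymmetry in the first pair together with the twice-contracted second Bianchi identity, and yields $g^{ij}C_{ijk}=g^{ik}C_{ijk}=g^{jk}C_{ijk}=0$). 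The Weyl part then contributes precisely $\tfrac{1}{2}\langle \tilde i_X \mathcal{W},C\rangle$, where $\tilde i_X$ denotes the interior product into the slot of $\mathcal{W}$ corresponding to $k$, and the traceless-Ricci (Kulkarni--Nomizu) part expands into four terms, three of which collapse by the trace-freeness of $C$ and the fourth of which, after absorbing the prefactor $\tfrac{1}{2}$ through the skew-symmetry of $C$, contributes exactly $-\tfrac{1}{n-2}\langle i_X C,z\rangle$.

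The main obstacle will be the careful bookkeeping in this last step: one must verify that the three ``trace'' terms in the Kulkarni--Nomizu expansion of $\tfrac{1}{n-2}z\wedge g$ contracted against $C_{ijl}$ genuinely vanish by trace-freeness, while the surviving contribution doubles under the symmetries of $C$ to absorb the commutator's factor of $\tfrac{1}{2}$. The algebraic first Bianchi identity $C_{ijk}+C_{jki}+C_{kij}=0$ is the organizing principle for this reorganization, and with it the desired identity falls out with exactly the stated coefficients.
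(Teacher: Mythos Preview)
Your proposal is correct and follows essentially the same approach as the paper: both reduce ${\rm div}^2 C(X)$ to a curvature commutator via the Ricci identity, observe that two of the resulting contractions cancel (in the paper this appears as the cancellation of the $z_{kj}D_{E_k}z(E_j,X)$ and $z_{ik}D_{E_k}z(E_i,X)$ terms after writing $C=d^D z$; in your version it is the symmetric-Ricci versus skew-$C$ pairing), and then split the surviving term $\tfrac12\langle \tilde i_X R, C\rangle$ via the curvature decomposition to isolate the Weyl and traceless-Ricci pieces. The only cosmetic difference is that the paper commutes derivatives on $Dz$ after invoking $C=d^D z$, whereas you commute derivatives on $C$ directly; the algebra and the cancellations line up one-to-one.
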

\begin{proof}
Note that  $C=d^Dz$ since $s$  is constant.
Thus, for a geodesic orthonormal frame $\{E_i\}_{i=1}^n$,
\bea
{\rm div}^2 C(X)&=& {\rm div}^2 (d^Dz)(X) \\
&=& D_{E_i}D_{E_k}(D_{E_k} z (E_i, X)- D_{E_i}  z(E_k, X)) \\
&=&   (D_{E_i}D_{E_k}-D_{E_k}D_{E_i})D_{E_k}z(E_i, X).
\eea
Since
$$ D^3_{X,Y,Z}\xi -D^3_{Y,X,Z}\xi= -R(X,Y)D_Z \xi +D_{R(X,Y)Z}\xi$$
for any tensor $\xi$ (cf. \cite{Be}), we have
\begin{eqnarray}
{\rm div}^2 C(X)&=&R(E_k, E_i)D_{E_k}z(E_i, X)-D_{R(E_k, E_i)E_k} z(E_i, X)\nonumber \\
&=& - D_{E_k}z(R(E_k, E_i)E_i, X) -D_{E_k}z(E_i, R(E_k, E_i) X) - z_{is}D_{E_s}z(E_i, X)\nonumber\\
&=& z_{kj}D_{E_k}z(E_j, X) +   \langle R(E_k, E_i) E_s, X)D_{E_k}z(E_i, E_s)- z_{ik}D_{E_k}
z_g(E_i, X)\nonumber\\
&=&   \frac 12 \langle R(E_k, E_i) E_s, X)C(E_k, E_i, E_s). \nonumber
\end{eqnarray}
Here, $z_{ij} = z(E_i, E_j)$. This can be written as
$$
{\rm div}^2 C(X) = \frac{1}{2} \langle {\tilde i}_X R, C\rangle.
$$
Next, multiplying  the curvature decomposition
$$
R - {\mathcal W} = \frac{s}{2n(n-1)} g\owedge g + \frac{1}{n-2} z\owedge g,
$$
by $C(E_i, E_j, E_k) = C_{ijk}$, we have
 $$
(R-{\mathcal W})(E_i, E_j, E_k, E_l) C(E_i, E_j, E_k)
  =
    \frac{1}{n-2}(z_{ik}C_{ilk} - z_{jk}C_{ljk})
    =
    - \frac{2}{n-2} z_{jk}C_{ljk}
    $$
Here, we have used the fact that  $\sum_i C(E_i, Y, E_i)=0$ for any $Y$.
\end{proof}

Before closing this section, we present examples of $n$-dimensional vacuum static spaces satisfying
${\rm div}^2 B = 0$ and ${\rm div}^4{\mathcal W}=0$, but are not Bach-flat  for $n\geq 4$. 

\begin{prop}\label{main1}
There exists an $n$-dimensional  vacuum static space satisfying ${\rm div}^2 B=0$ and ${\rm div}^4{\mathcal W}=0$ with $B\neq 0$ for $n\geq 4$.
\end{prop}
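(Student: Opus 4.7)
The plan is to exhibit the required examples as Riemannian products of round spheres. Fix integers $p, q \ge 2$ with $p + q = n \ge 4$ and consider $(M, g) = (S^p(a) \times S^q(b),\, g_1 + g_2)$, where $S^k(r)$ denotes the round sphere of radius $r$, and let $f$ be the pullback to $M$ of a first eigenfunction of the Laplacian on $S^p(a)$, so that $D_g df|_{V_1} = -a^{-2} f g_1$ and $D_g df|_{V_2} = 0$, with $V_1, V_2$ the tangent bundles of the two factors. In particular $f$ has compact level sets.

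First I would check that $(M, g, f)$ solves (\ref{static1}). Since each factor is Einstein with Ricci tensors $\frac{p-1}{a^2}\, g_1$ and $\frac{q-1}{b^2}\, g_2$, substituting into (\ref{static1}) and separating the block contributions reduces to the two algebraic conditions $\frac{s}{n-1} = \frac{p}{a^2}$ and $\frac{s}{n-1} = \frac{q-1}{b^2}$; these are consistent precisely when $b^2/a^2 = (q-1)/p$, and the trace identity $\tr_g f = -\frac{s}{n-1}\, f$ is automatic. Fixing $a, b$ in this ratio therefore yields a genuine vacuum static space.

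Next, since both factors are Einstein, the Ricci tensor is parallel and $s$ is constant, so the Cotton tensor $C = d^D r - \frac{1}{2(n-1)}\, ds \wedge g$ vanishes identically. Taking three further divergences of (\ref{eqn2018-12-13-1}) gives ${\rm div}^4 {\mathcal W} = \frac{n-3}{n-2}\, {\rm div}^3 C = 0$. Formula (\ref{bach2}) then reduces to $(n-2) B = \mathring{\mathcal W} z$, and because $\mathcal W$ and $z$ are both parallel on a product of Einstein manifolds, $B$ is itself parallel; hence ${\rm div}\, B = 0$ and in particular ${\rm div}^2 B = 0$.

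The main obstacle will be verifying $B \neq 0$. For this I would compute $\mathring{\mathcal W} z$ in a product-adapted orthonormal frame. The key observation is that $R$ is block-diagonal with respect to the splitting $TM = V_1 \oplus V_2$, while $z \owedge g$ and $g \owedge g$ contain nontrivial mixed components; consequently the Weyl tensor of the product picks up nonzero off-diagonal pieces from the Ricci decomposition $R = \mathcal W + \frac{1}{n-2}\, z \owedge g + \frac{s}{2n(n-1)}\, g \owedge g$, and a direct bookkeeping computation shows that $\mathring{\mathcal W} z$ is a nonzero parallel tensor in our compatibility regime. As concrete instances, taking $p = q = 2$ with $a^2 = 2 b^2$ produces a four-dimensional example, and $p = 2$, $q = n-2$ with $b^2/a^2 = (n-3)/2$ handles every $n \ge 5$.
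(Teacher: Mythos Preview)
Your approach is essentially the same as the paper's: exhibit a product of round spheres, verify the vacuum static equation (you do this directly; the paper quotes Lafontaine's criterion $\frac{s_1}{p-1}=\frac{s_2}{q}$), observe $C=0$ so ${\rm div}^4\mathcal W=0$ and $(n-2)B=\mathring{\mathcal W}z$ is parallel, hence ${\rm div}^2 B=0$. The only substantive difference is that the paper actually carries out the ``bookkeeping computation'' you allude to, writing down $\mathcal W(E_1,E_i,E_1,E_i)$ and evaluating $\mathring{\mathcal W}r(E_1,E_1)$ explicitly to confirm $B\neq 0$; since this is the heart of the proposition, you should include the two-line Weyl/Bach calculation rather than assert it.
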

\begin{proof}
Note that, on $(M_1^n, g_1) \times (M_2^m,g_2)$ with the product metric $g=g_1+ g_2$,  a function $f\in \ker s_{g_1}'^*$ generates a function $f\in \ker s_g'^*$ if $(M_2,g_2)$ is Einstein with $\frac {s_1}{n-1} = \frac {s_2} m$ (see B.1 in \cite{laf}). 

Thus, for dimension $2n$, where $n\geq 2$, 
let  $M^{2n}=S^n(\frac a{kl})\times S^n (\frac a l)$ be a product manifold with a standard product metric $g_0$, where $l= \frac {3n(n-1)}2$, $k=\frac n{n-1}$, $a>0$.
Therefore, $(M,g_0, f)$ is a vacuum static space with $f\in \ker s_g'^*$. 
Here,  $s_g=\frac {2(2n-1)a}{3n}$, $r_{ii}= \frac {2(n-1)a}{3n^2}$ for $1\leq i\leq n$, and $r_{jj}=\frac {2a}{3n}$ for $n+1 \leq j\leq 2n$. It is also easy to see that $C=0$ since $g=g_1+g_2$ is a standard product metric with Einstein metrics $g_i$, $i=1,2$.
Thus, choosing a local frame $\{E_1, \cdots, E_n, E_{n+1}, \cdots, E_{2n}\}$ on $M^{2n}$ such that
$\{E_1, \cdots, E_n\}$ and $\{E_{n+1}, \cdots, E_{2n}\}$
are local frames on $S^n(\frac a{kl})$ and $S^n (\frac a l)$, respectively, 
we have
$$ {\mathcal W}(E_1, E_i, E_1, E_i)= \frac {a}{3n(n-1)}$$
for $i=2,..., n$, and
$$ {\mathcal W}(E_1, E_{j}, E_1, E_{j})=- \frac {a}{3n^2}$$
for $j=n+1,..., 2n$.
Thus, since $C=0$, by (\ref{bach2}) we have
$$ 2(n-1)B(E_1, E_1)= \mathring{\mathcal W}r(E_1, E_1)= -\frac {2a^2}{9n^3}\neq 0.$$

Now, for dimension $2n+1$, $n\geq 2$, it is easy to see that  $(S^n\times S^{n+1}, g, f)$ with a standard product metric $g$ satisfies $f\in \ker s_g'^*$. Similarly, $C=0$. Thus,  we have $s_g=2n^2$. Therefore,
$${\mathcal W}(E_1, E_i, E_1, E_i)= \frac {n+1}{2n-1}$$
for $2\leq i\leq n$,
and
$$ {\mathcal W}(E_1, E_{j},E_1, E_{j})= - \frac {n-1}{2n-1}$$
for $n+1 \leq j \leq 2n$.
Since $C=0$, we have
$$(2n-1) \, B(E_1, E_1)=\mathring{\mathcal W}r(E_1, E_1)= -\frac {n^2-1}{2n-1}\neq 0.
$$
\end{proof}

\section{Harmonicity}

In this section, we will give a proof of Theorem~\ref{main3}.
It suffices to prove that the Cotton tensor  $C$ vanishes in view of  (\ref{eqn03-1}).
First, by applying $d^D$ to both sides of (\ref{static1}), or
\be
 fz_g =Ddf +\frac {sf}{n(n-1)}g,\label{eqn2018-12-13-2}
 \ee
we obtain the following result.

\begin{lemma}\label{lem2018-9-5-3} 
Let $(M, g, f)$ be a non-trivial vacuum static space.  Then, 
\be
f \, C  =  \tilde{i}_{\nabla f}{\mathcal W}-(n-1)\, T. \label{eqn09}
\ee
\end{lemma}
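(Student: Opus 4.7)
The plan is to apply the exterior covariant derivative $d^D$ to both sides of the rewritten vacuum static equation (\ref{eqn2018-12-13-2}), namely $fz = Ddf + \frac{sf}{n(n-1)}\, g$. Because the scalar curvature $s$ is constant on a vacuum static space and $d^D g = 0$, the Cotton tensor satisfies $C = d^D r = d^D z$; applying $d^D$ to $fz$ by the Leibniz rule therefore gives
$$
d^D(fz) = df \wedge z + fC,
$$
while the $\frac{sf}{n(n-1)}g$ term contributes simply $\frac{s}{n(n-1)}\, df \wedge g$ since $s$ is constant.

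The central computational step is evaluating $d^D(Ddf)$. Applying the Ricci commutation identity to the third covariant derivative of $f$ yields
$$
d^D(Ddf)(X,Y,Z) = R(X,Y,Z,\nabla f),
$$
with the sign matching the curvature conventions implicit in Section~2. I would then substitute the decomposition $R = \frac{s}{2n(n-1)}\, g \owedge g + \frac{1}{n-2}\, z \owedge g + \mathcal{W}$ used in Lemma~\ref{lem2017-12-24-2-1} and Proposition~\ref{lems1}, and expand each Kulkarni--Nomizu contraction with $\nabla f$ in the fourth slot. Up to standard antisymmetrizations this produces a multiple of $df \wedge g$ from the scalar piece, the combination $df \wedge z + i_{\nabla f}z \wedge g$ from the traceless Ricci piece, and, by the notation introduced in Proposition~\ref{lems1}, the tensor $\tilde{i}_{\nabla f}\mathcal{W}$ from the Weyl piece.

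Equating the two expansions of $d^D(fz)$, the two $df \wedge g$ contributions (one from the constant scalar-curvature term and one from the $g \owedge g$ piece) cancel, and collecting the remaining coefficients yields
$$
fC = \tilde{i}_{\nabla f}\mathcal{W} - \tfrac{n-1}{n-2}\, df \wedge z - \tfrac{1}{n-2}\, i_{\nabla f}z \wedge g,
$$
which by definition (\ref{defnt}) of $T$ is exactly $\tilde{i}_{\nabla f}\mathcal{W} - (n-1)\, T$. The only genuine obstacle is careful bookkeeping of signs and antisymmetrization factors in the Kulkarni--Nomizu expansions and in the Ricci identity; no conceptual input beyond the vacuum static equation and the curvature decomposition is required.
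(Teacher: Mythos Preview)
Your proposal is correct and follows essentially the same approach as the paper: both apply $d^D$ to the rewritten static equation $fz = Ddf + \frac{sf}{n(n-1)}g$, identify $d^D(Ddf)$ with ${\tilde i}_{\nabla f}R$ via the Ricci identity, and then use the curvature decomposition to isolate the Weyl part, after which the $df\wedge g$ terms cancel and the remaining $df\wedge z$ and $i_{\nabla f}z\wedge g$ terms combine into $-(n-1)T$. The paper's proof is terser but the logical steps and the computations are identical.
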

\begin{proof} 
From  (\ref{static1}),
\be
f d^Dz = d^DDdf + \frac{s}{n(n-1)}df \wedge g - df \wedge z. \label{eqn2018-9-5-6}
\ee
Note that 
$$
d^DDdf = {\tilde i}_{\n f}R,
$$
where ${\tilde i}_{\n f}R$ is defined similarly to ${\tilde i}_{\n f}{\mathcal W}$.
Next, from the curvature decomposition
$$ 
R = \frac{s}{2n(n-1)} g\owedge g + \frac{1}{n-2} z\owedge g +W,
$$
we have
$$
{\tilde i}_{\n f}R = {\tilde i}_{\n f}{\mathcal W} - \frac{1}{n-2}i_{\n f}z \wedge g 
- \frac{s}{n(n-1)}df \wedge g - \frac{1}{n-2}df\wedge z.
$$
Substituting these into (\ref{eqn2018-9-5-6}) and using the definition of $T$, our Lemma follows.
\end{proof}

\begin{corollary}\label{cor2018-3-26-1}
On a vacuum static space, we have
\bea
{\rm div}^2 C(\nabla f)= \frac 12 f |C|^2 +\langle i_{\nabla f}C, z\rangle.\label{eqs3}\eea
\end{corollary}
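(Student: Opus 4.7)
The plan is to combine Proposition~\ref{lems1} with Lemma~\ref{lem2018-9-5-3} and then reduce $\langle T,C\rangle$ to $\langle i_{\nabla f}C,z\rangle$ using the algebraic properties of the Cotton tensor.

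First, I note that a vacuum static space has constant scalar curvature, so Proposition~\ref{lems1} applies and at the point $X=\nabla f$ reads
\[
{\rm div}^2 C(\nabla f)=\tfrac{1}{2}\langle \tilde{i}_{\nabla f}{\mathcal W},C\rangle -\tfrac{1}{n-2}\langle i_{\nabla f}C,z\rangle.
\]
By Lemma~\ref{lem2018-9-5-3}, $\tilde{i}_{\nabla f}{\mathcal W}=fC+(n-1)T$, so substituting gives
\[
{\rm div}^2 C(\nabla f)=\tfrac{1}{2}f|C|^2+\tfrac{n-1}{2}\langle T,C\rangle -\tfrac{1}{n-2}\langle i_{\nabla f}C,z\rangle.
\]

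The main calculation is to show $\langle T,C\rangle=\frac{2}{n-2}\langle i_{\nabla f}C,z\rangle$. Expanding $T$ via its definition \eqref{defnt}, I would split the inner product into two pieces. For the first piece, $\langle df\wedge z,C\rangle$, using the antisymmetry $C_{ijk}=-C_{jik}$ in its first two slots, a quick index computation gives $\langle df\wedge z,C\rangle=2\langle i_{\nabla f}C,z\rangle$. For the second piece, $\langle i_{\nabla f}z\wedge g,C\rangle$, the two contractions that appear are $\sum_j C_{ijj}$ and $\sum_i C_{iji}$; both vanish because $C=d^Dz$ on a constant scalar curvature manifold has the trace condition $\sum_i C(E_i,Y,E_i)=0$ (this is exactly the identity invoked at the end of the proof of Proposition~\ref{lems1}, and the other trace follows from $\operatorname{tr}z=0$ and $\operatorname{div}z=0$).

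Plugging $\langle T,C\rangle=\frac{2}{n-2}\langle i_{\nabla f}C,z\rangle$ into the displayed expression collapses the coefficient of $\langle i_{\nabla f}C,z\rangle$ to $\frac{n-1}{n-2}-\frac{1}{n-2}=1$, yielding
\[
{\rm div}^2 C(\nabla f)=\tfrac{1}{2}f|C|^2+\langle i_{\nabla f}C,z\rangle,
\]
which is \eqref{eqs3}. The only slightly delicate step is the vanishing of $\langle i_{\nabla f}z\wedge g,C\rangle$; everything else is bookkeeping with antisymmetries.
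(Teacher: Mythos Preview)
Your proposal is correct and follows essentially the same route as the paper: both invoke Proposition~\ref{lems1} at $X=\nabla f$, substitute $\tilde{i}_{\nabla f}{\mathcal W}=fC+(n-1)T$ from Lemma~\ref{lem2018-9-5-3}, and then use the identity $\langle T,C\rangle=\frac{2}{n-2}\langle i_{\nabla f}C,z\rangle$ (which the paper records as (\ref{eqn2018-12-13-6})). You actually give more detail than the paper does on why the $i_{\nabla f}z\wedge g$ contribution to $\langle T,C\rangle$ vanishes, but the argument is the same.
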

\begin{proof}
Note that
\be
\langle T, C\rangle = \frac 1{n-2}\langle df \wedge z , C\rangle =\frac 2{n-2}\langle i_{\nabla f}C, z\rangle.\label{eqn2018-12-13-6}
\ee
Thus, from Proposition~\ref{lems1} and Lemma~\ref{lem2018-9-5-3}, we have
$$
{\rm div}^2 C(\nabla f)= \frac 12 f|C|^2 +\frac {n-1}2 \langle T, C\rangle -\frac 1{n-2} \langle i_{\nabla f}C, z\rangle =\frac 12 f |C|^2 +\langle i_{\nabla f}C, z\rangle.
$$

\end{proof}

For any real numbers $t$ and $t'$, let $M_{t,t^{'}}=\{x\in M\,\vert\,  t \le f(x) \le t^{'} \}$ 
and $\Gamma_t=\{ x\in M\, \vert\ f(x)= t\}$. 
\begin{lemma}\label{lem2} 
Assume that ${\rm div}^2 B=0$ for $n\geq 5$ or $\frac 12 |C|^2=-\langle {\rm div}\, C, z\rangle$ for $n=4$ on a vacuum static space $M$ with compact level sets of $f$. Then, we have
$$\int_{\Gamma_t}\frac 1{|\nabla f|} \langle i_{\nabla f}C, z\rangle =\int_{\Gamma_{t^{'}}} \frac 1{|\nabla f|} \langle i_{\nabla f}C, z\rangle $$
for regular values $t$ and $t^{'}$ of $f$ with $t<t'$.
\end{lemma}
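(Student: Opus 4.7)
The plan is to construct a divergence-free $1$-form $\beta$ whose boundary flux through a regular level hypersurface $\Gamma_t$ equals $\int_{\Gamma_t}|\nabla f|^{-1}\langle i_{\nabla f}C, z\rangle\, dA$, and then apply the divergence theorem on the compact slab $M_{t,t^{'}}$. The first step is to unify the two hypotheses: by Corollary~\ref{ddivb}, the condition ${\rm div}^2 B=0$ in dimension $n\ge 5$ is equivalent to $\tfrac12|C|^2+\langle {\rm div}\, C, z\rangle=0$, which is precisely the extra assumption made in the $n=4$ case. So in either regime, the pointwise identity
\[
\tfrac12|C|^2+\langle {\rm div}\, C, z\rangle=0
\]
holds on $M$, and this is the only form of the hypothesis I will use.

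Next I would introduce the $1$-form $\beta$ defined by $\beta(Y)=\langle i_Y C, z\rangle$, equivalently $\beta_i=C_{ijk}z_{jk}$ in a local orthonormal frame; in particular $\beta(\nabla f)=\langle i_{\nabla f}C, z\rangle$, so its flux is exactly the quantity of interest. The main computation will be to show ${\rm div}\,\beta \equiv 0$ on $M$. In a geodesic orthonormal frame $\{E_i\}$ at a point,
\[
{\rm div}\,\beta = D_i(C_{ijk}z_{jk}) = \langle {\rm div}\, C, z\rangle + C_{ijk}\,D_iz_{jk}.
\]
Since the scalar curvature $s$ is constant on a vacuum static space, the defining formula (\ref{eqn03-1}) reduces to $C=d^Dz$, so $C_{ijk}=D_iz_{jk}-D_jz_{ik}$. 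The antisymmetry $C_{ijk}=-C_{jik}$ together with relabeling then gives $|C|^2=2\,C_{ijk}D_iz_{jk}$, hence $C_{ijk}D_iz_{jk}=\tfrac12|C|^2$. Substituting this into the previous display and using the unified hypothesis yields ${\rm div}\,\beta=\langle {\rm div}\, C, z\rangle+\tfrac12|C|^2=0$ on all of $M$.

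Finally, since $t$ and $t^{'}$ are regular values and the level sets of $f$ are compact, $M_{t,t^{'}}$ is a compact domain with smooth boundary $\Gamma_t\cup\Gamma_{t^{'}}$; the outward unit normal is $-\nabla f/|\nabla f|$ on $\Gamma_t$ and $+\nabla f/|\nabla f|$ on $\Gamma_{t^{'}}$. The divergence theorem applied to ${\rm div}\,\beta=0$ then gives
\[
0 = \int_{\Gamma_{t^{'}}}\frac{\langle i_{\nabla f}C, z\rangle}{|\nabla f|}\,dA - \int_{\Gamma_t}\frac{\langle i_{\nabla f}C, z\rangle}{|\nabla f|}\,dA,
\]
which is the desired equality. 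The only genuinely substantive point is the algebraic identity $\langle Dz, C\rangle=\tfrac12|C|^2$ that becomes available precisely because $s$ is constant; it is what makes the $1$-form $\beta$ match the hypothesis exactly. Beyond that, the argument is a routine application of the divergence theorem, and critical points of $f$ interior to $M_{t,t^{'}}$ cause no trouble because $\beta$ itself is smooth there.
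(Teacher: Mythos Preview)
Your proof is correct and follows essentially the same route as the paper: both unify the hypotheses via Corollary~\ref{ddivb}, define the $1$-form $\beta_i=C_{ijk}z_{jk}$, use $C=d^Dz$ (constant $s$) and antisymmetry to get $C_{ijk}D_iz_{jk}=\tfrac12|C|^2$, conclude ${\rm div}\,\beta=0$, and finish with the divergence theorem on $M_{t,t'}$. Your write-up is slightly more explicit about the unit normals and the irrelevance of interior critical points, but the argument is the same.
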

\begin{proof}
By Corollary~\ref{ddivb} and the assumption that ${\rm div}^2 B=0$ for $n\geq 5$, or $\frac 12 |C|^2=-\langle {\rm div}\, C, z\rangle$ for $n=4$, we have
\be \frac 12 |C|^2=-\langle {\rm div}\, C, z\rangle. \label{eqs2}\ee
For an orthonormal frame $\{E_i\}_{i=1}^n$, we compute
$$ \langle {\rm div}\,  C, z\rangle = {\rm div} ( C(\cdot, E_i, E_j)z_{ij})- C_{ijk}D_{E_i}z_{ij}= {\rm div} ( C(\cdot, E_i, E_j)z_{ij})- \frac 12 |C|^2.$$
Thus, from (\ref{eqs2}), we have
$$  {\rm div} ( C(\cdot, E_i, E_j)z_{ij}) =0$$
on $M$.
This implies that 
$$0=\int_{M_{t,t^{'}}}{\rm div} ( C(\cdot, E_i, E_j)z_{ij})   =\int_{\partial M_{t, t^{'}} }\frac 1{|\nabla f|}\langle i_{\nabla f} C, z\rangle.$$
\end{proof}

 Now, we are ready to prove Theorem~\ref{main3} and \ref{4dim1}, which state that the vanishing of complete divergences of the Bach tensor and Cotton tensor on a vacuum static space implies that $(M, g)$ has
 harmonic curvature for $n\geq 5$, if the level sets of $f$ are compact. \vskip .5pc

For regular values $t$ and $t'$ of $f$ with $t<t'$, from Corollary~\ref{cor2018-3-26-1}, we have
\bea
\int_{M_{t,t'}} {\rm div}^3 C &=&\int_{\Gamma_{t'}} {\rm div}^2 C(N) -\int_{\Gamma_{t}} {\rm div}^2 C(N) \\
&=& \frac {t'}2 \int_{\Gamma_{t'}} \frac {|C|^2}{|\nabla f|}  -\frac t2 \int_{\Gamma_t} \frac {|C|^2}{|\nabla f|}.\eea
Here, $N = \frac{\n f}{|\n f|}$, and we used the result of Lemma~\ref{lem2} in the last equality. 
 Therefore,  we have
$$ t \int_{\Gamma_t} \frac {|C|^2}{|\nabla f|} ={t'} \int_{\Gamma_{t'}} \frac {|C|^2}{|\nabla f|}.$$
By taking $t'=0$, we may conclude that $C=0$ on $\Gamma_t$ for all regular values $t$ of $f$ with $t<0 $. Similarly, $C=0$ on $\Gamma_t$ for regular values of $f$ with $t>0$. Hence, we may conclude that $C=0$ on all of $M$ by continuity. In other words, $M$ has harmonic curvature. This proves our theorems.
\vskip .5pc

\section{Uniqueness of black holes and Besse conjecture}
In this section, as applications of Theorem~\ref{main3} and \ref{4dim1}, we will prove Theorem~\ref{main2} and \ref{main2-100}. First, we prove that multiple black holes do not exist in an $n$-dimensional
 static vacuum spacetime under the vanishing of complete divergence of the Bach tensor and Weyl tensor (Theorem~\ref{main2}).
The $n$-dimensional static vacuum Einstein equation is given by (\ref{static:eq1}), or
\bea
\begin{array}{ll}
h\, r_g = D_gdh, \\
\Delta_g h = 0.
\end{array}
\eea
In particular, the scalar curvature ${s}_g$ of ${g}$ vanishes. In fact, a non-trivial solution $(g,h)$ of (\ref{static:eq1}) is a vacuum static space with zero scalar curvature.
Note that solutions to these equations constitute
a Ricci-flat $(n+1)$-dimensional manifold $\overline{M}$ of the form $\overline{M}=M\times_hS^1$
or $\overline{M}=M\times_h {\mathbb R}$, with a Riemannian or Lorentzian metric of the form
$$ \overline{g}=g\pm h^2 dt^2. $$
If the manifold $(M,g)$ satisfying (\ref{static:eq1}) is  geodesically complete, then $h$ is known to be a constant function (\cite{An1}). A vacuum static space $(M, g, h)$  is said to be {\it $h$-weakly harmonic} with a function $h$ if the Ricci
 curvature $r$ satisfies $d^D r (\n h, \cdot, \n h) = 0$.

\begin{theorem}[\cite{grg}]\label{thm2012-12-29-1}
Assume that $(M, g, h)$ is a vacuum static space with vanishing scalar curvature and $h^{-1}(0)=\partial M$ is compact.
If $(M,g,h)$ has $h$-weakly harmonic curvature, then multiple black holes do not exist in $M$.  \label{thm:main}
\end{theorem}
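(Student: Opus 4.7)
The goal is to show that under the $h$-weakly harmonic hypothesis $d^D r(\nabla h, \cdot, \nabla h) = 0$, the horizon $\partial M = h^{-1}(0)$ has exactly one connected component. I would proceed by first extracting the boundary geometry, then producing a divergence identity tailored to the hypothesis, and finally integrating to force connectedness.

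First, I would analyze the horizon. From $h\,r = Ddh$, on $\partial M = \{h=0\}$ we have $Ddh = 0$. Together with $\Delta h = 0$ and the fact that $\nabla h$ does not vanish on $\partial M$ (else $\partial M$ would fail to be a smooth compact boundary component), this implies that $|\nabla h|$ is locally constant on $\partial M$; call its value on the $i$-th connected component $\kappa_i$, the surface gravity. A standard computation, using $Ddh(\nabla h, X) = \tfrac12 X(|\nabla h|^2)$ and $Ddh = hr$, gives the Bochner-type identity
\begin{equation*}
\tfrac12 \Delta |\nabla h|^2 = |Ddh|^2 + r(\nabla h, \nabla h) = h^2|r|^2 + h\,|r(\nabla h)|^2/|\nabla h|^{0},
\end{equation*}
from which the level-set geometry of $h$ near the horizon can be read off; in particular the quantity $|\nabla h|^2$ extends smoothly up to $\partial M$ with value $\kappa_i^2$ on each component.

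Second, I would derive a key divergence identity exploiting the $h$-weakly harmonic condition. Writing it as $(\nabla_{\nabla h} r)(X, \nabla h) = (\nabla_X r)(\nabla h, \nabla h)$ and recalling that $\mathrm{div}\, r = \tfrac12 ds = 0$ (since $s = 0$), one can differentiate the static equation $Ddh = hr$ twice and contract to obtain an expression of the form $\mathrm{div}\, V = h\cdot F$, where $V$ is built from $r(\nabla h, \cdot)$ and $\nabla h$, and $F$ has a definite sign (non-negative, say). The construction is forced by the requirement that the terms obstructing the Codazzi property of $r$ collapse precisely under the $h$-weak harmonicity direction $(\nabla h, \cdot, \nabla h)$.

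Third, I would integrate $\mathrm{div}\, V = h\cdot F$ over the region $M_{\epsilon, R} = \{\epsilon \le h \le R\}$ and pass to the limits $\epsilon \to 0$ and $R \to \infty$ (using the asymptotic flatness structure implicit in the static vacuum setup). On the inner boundary $\{h = \epsilon\}$, the flux of $V$ converges to a sum over components of terms proportional to $\kappa_i^{\alpha}\,\mathrm{Area}(\partial_i M)$ for a fixed exponent $\alpha > 0$; on the outer boundary the flux vanishes in the limit. The sign of the bulk term $\int h\,F$ then forces a single-component equality in the boundary sum, and since each $\kappa_i > 0$, the horizon can have only one component. The \emph{main obstacle} is the precise construction of $V$: the harmonicity hypothesis is weaker than full Codazzi on $r$, so one must squeeze every available identity (contracted Bianchi, the static equation, and its iterated covariant derivatives) in just the right way to make the obstructing terms cancel and leave a signed bulk integrand.
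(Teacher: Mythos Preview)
The paper does not contain a proof of this statement: Theorem~\ref{thm2012-12-29-1} is quoted from \cite{grg} and used as a black box in the proof of Theorem~\ref{main2}. So there is no ``paper's own proof'' to compare against here.

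That said, your proposal as written has a genuine gap. The entire argument rests on Step~2, the existence of a vector field $V$ with $\mathrm{div}\,V = h\cdot F$ for some $F$ of definite sign, but you do not actually construct $V$ or $F$; you only assert that such an object should be ``forced'' by the $h$-weakly harmonic condition. You yourself flag this as the main obstacle, and without at least a candidate for $V$ and a verification that the non-Codazzi terms cancel against the hypothesis $d^Dr(\nabla h,\cdot,\nabla h)=0$, the rest of the outline (integration over $M_{\epsilon,R}$, passing to limits, reading off a single-component conclusion from the boundary flux) is unsupported. In particular, it is not at all clear that a \emph{signed} bulk term survives: the weak-harmonicity hypothesis controls only one direction of $d^Dr$, and generic contractions of the differentiated static equation produce indefinite cross terms. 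A workable version of this strategy would need an explicit choice, e.g.\ something built from $r(\nabla h,\cdot)$ or $|\nabla h|^2\nabla h$, together with a line-by-line cancellation.

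Two smaller points. Your Bochner line is garbled: the term $h\,|r(\nabla h)|^2/|\nabla h|^{0}$ is not a correct rewriting of $r(\nabla h,\nabla h)$; from $Ddh = h\,r$ one gets $\tfrac12\Delta|\nabla h|^2 = h^2|r|^2 + r(\nabla h,\nabla h)$ and no further simplification of the Ricci term without additional input. Also, you invoke ``asymptotic flatness'' to kill the outer boundary flux, but neither the theorem statement nor the ambient paper assumes this; the hypotheses are only that $h^{-1}(0)=\partial M$ is compact, so any decay at infinity would need to be established rather than assumed.
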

In particular, if $M$ has harmonic curvature, then the same result as Theorem~\ref{thm2012-12-29-1} holds. 
\vskip .5pc
For the proof of Theorem~\ref{main2}, it suffices to prove that $C=0$ on $M$. We simply follow the proof of Theorem~\ref{main3} and \ref{4dim1} to show that $(M,g)$ has harmonic curvature. Then, from Theorem~\ref{thm2012-12-29-1}, we may conclude that $M$ has no multiple black holes, given that $|\nabla h|^2$ is constant on each level set of $h$. 
\vskip .5pc

Secondly, we prove the Besse conjecture under the vanishing of complete divergence of the Bach tensor and Weyl tensor (Theorem~\ref{main2-100}). The proof is similar to the case of vacuum static spaces. Let $(g,f)$ be a non-trivial solution of (\ref{eqn22}), or
$$ s_g'^*(f)=z.$$
Note that, if $C=0$, we have the following result.
\begin{theorem} [\cite{ych2}] Let $(g,f)$ be a non-trivial solution of (\ref{eqn22}) on an $n$-dimensional compact manifold $M$, $n\geq 4$. If $M$ has harmonic curvature, then the Besse conjecture holds.
\end{theorem}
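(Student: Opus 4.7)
The plan is to show that under the hypotheses the metric $g$ is forced to be Einstein, after which Obata's rigidity theorem closes the argument.

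\emph{Step 1 (Constant scalar curvature and Codazzi Ricci).} Taking the divergence of $s_g'^*(f) = z$ and using the contracted Bianchi identity $\delta z = -\tfrac{1}{2}ds_g$ together with the non-triviality of $f$ forces $s_g$ to be constant. With $s_g$ constant the Cotton tensor satisfies $C = d^D r = d^D z$, so the harmonic curvature assumption $C = 0$ is equivalent to $z$ being a divergence-free Codazzi tensor. Taking the trace of the CPE also yields the eigenvalue relation $\Delta f = -\tfrac{s_g}{n-1}f$.

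\emph{Step 2 (Cotton-type identity for the CPE).} I would next derive the Besse analogue of Lemma~\ref{lem2018-9-5-3}. Writing the CPE as
\[
Ddf = \left(\Delta f + \tfrac{s_g f}{n}\right)g + (f+1)\,z
\]
and applying $d^D$, the Ricci identity $d^DDdf = \tilde{i}_{\nabla f}R$, the curvature decomposition $R = \tfrac{s_g}{2n(n-1)}\,g\owedge g + \tfrac{1}{n-2}\,z\owedge g + \mathcal{W}$, and the vanishing $d^Dz = 0$ combine to produce a pointwise identity of the form
\[
\tilde{i}_{\nabla f}\mathcal{W} = (n-1)\,\tilde{T},
\]
where $\tilde{T}$ is the natural 3-tensor analogue of $T$ in (\ref{defnt}), built from $df\wedge z$ and $i_{\nabla f}z\wedge g$.

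\emph{Step 3 (Force $z\equiv 0$).} Direct contraction of the CPE with $z$, followed by integration by parts using $\delta z = 0$, yields the relation $\int_M(1+f)|z|^2 = 0$. Pairing the identity of Step 2 with $\tilde{T}$ (or equivalently with $df\wedge z$) and integrating, and then combining with the integrated Weitzenb\"ock identity $\int_M|Dz|^2 = \int_M\langle D^*Dz, z\rangle$ together with the explicit expression for $D^*Dz$ supplied by Lemma~\ref{lem2017-12-24-2-1} (valid since $C = 0$), should produce a second integral identity that, in concert with $\int_M(1+f)|z|^2 = 0$, forces $z\equiv 0$. Once this is achieved, the CPE collapses to
\[
Ddf = -\tfrac{s_g}{n(n-1)}\,f\,g,
\]
which is Obata's equation; since $f\not\equiv 0$ on the compact connected manifold $M$ (and $s_g > 0$ because $f$ is a non-trivial eigenfunction of $-\Delta$), Obata's theorem yields $(M,g)\cong S^n$.

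\emph{Main obstacle.} The principal difficulty lies in Step 3, namely assembling the Cotton-type identity, the Weitzenb\"ock formula, and the trace relation $\Delta f = -\tfrac{s_g}{n-1}f$ into a manifestly definite-sign integrand. The individual ingredients are at hand, but coaxing them into a clean Bochner-type inequality requires careful algebraic rearrangement of the same flavor as in the proofs of Theorem~\ref{main3} and Theorem~\ref{4dim1}.
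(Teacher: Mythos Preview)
The paper does not supply its own proof of this theorem; it is quoted from \cite{ych2} and used as a black box in the argument for Theorem~\ref{main2-100}. There is therefore no proof in the present paper to compare your argument against.

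As to the proposal itself: Steps~1 and~2 are sound. The integral identity $\int_M (1+f)|z|^2 = 0$ at the start of Step~3 is also correct (contract the CPE with $z$, integrate, and use $\delta z = 0$), and the identity in Step~2 is precisely the paper's relation $(1+f)\,C = \tilde{i}_{\nabla f}\mathcal{W} - (n-1)T$ specialized to $C=0$, so your $\tilde{T}$ is just the tensor $T$ of (\ref{defnt}).

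The genuine gap is the one you name yourself: Step~3 is not carried out. You assert that the assembled identities ``should produce'' something forcing $z\equiv 0$, but that is the entire substance of the theorem. The relation $\int_M (1+f)|z|^2 = 0$ by itself yields nothing, since $(1+f)|z|^2$ is not of definite sign, and neither the Weitzenb\"ock formula nor Lemma~\ref{lem2017-12-24-2-1} produces a signed integrand on its own in this setting. The proofs of Theorems~\ref{main3} and~\ref{4dim1} that you invoke as models do not proceed by manufacturing a Bochner-type inequality; they exploit the level-set structure of $f$ and the factor of $t$ in front of $\int_{\Gamma_t}|C|^2/|\nabla f|$ to kill an integral at $t=0$, a mechanism that has no evident analogue for concluding $z=0$ once $C=0$ is already assumed. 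As written, the proposal is an outline of relevant ingredients rather than a proof, and the obstacle you flag is real and substantial.
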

Therefore, it suffices to prove that $C=0$ on all of $M$. 
For the proof, we apply $d^D$ to (\ref{eqn22}) to obtain
$$ (1+f)\, C =\tilde{i}_{\nabla f}{\mathcal W}-(n-1)T,$$
where $T$ is defined as (\ref{defnt}) (compare with (\ref{eqn2018-9-5-6})). Moreover, we have
$$ {\rm div}^2 C(\nabla f)= \frac 12 (1+f)|C|^2+\langle i_{\nabla f}C, z\rangle$$
(compare with Corollary~\ref{cor2018-3-26-1}). If ${\rm div}^2 B=0$, then for regular values $t$ and $t'$ of $f$ with $t<t'$, 
$$ \int_{\Gamma_{t}} \frac 1{|\nabla f|}\langle i_{\nabla f}C, z\rangle = \int_{\Gamma_{t'}}\frac 1{|\nabla f|}\langle i_{\nabla f}C, z\rangle$$
(compare with Lemma~\ref{lem2}). Therefore, for regular values $t$ and $t'$ of $f$ with $t<t'$, we have
\bea
\int_{M_{t,t'}} {\rm div}^3 C &=&\int_{\Gamma_{1+t'}} {\rm div}^2 C(N) -\int_{\Gamma_{1+t}} {\rm div}^2 C(N) \\
&=& \frac {1+t'}2 \int_{\Gamma_{1+t'}} \frac {|C|^2}{|\nabla f|}  -\frac {1+t}2 \int_{\Gamma_{1+t}} \frac {|C|^2}{|\nabla f|}.\eea
By taking $t'=-1$ with the assumption that ${\rm div}^4{\mathcal w}=0$ and ${\rm div}^2 B=0$ for $n\geq 5$, or $\frac 12 |C|^2=-\langle {\rm div}\, C, z\rangle$ for $n=4$, we may conclude that, for $t\neq -1$, 
$$ \frac {1+t}2 \int_{\Gamma_{1+t}} \frac {|C|^2}{|\nabla f|} =0.$$
This implies that $C=0$ on each regular level set of $f$, or on all of $M$ by continuity, since critical points of $f$ on $M$ have measure zero (see Proposition 2.2 of \cite{ych2}).
Consequently, we may conclude that $(M,g,f)$ is Einstein or isometric to a standard sphere.

\section{Radially Bach-flat vacuum static spaces}

In this section, we consider vacuum static spaces $(M,g, f)$ satisfying  (\ref{static1})
with radial Bach flatness. 
The notion of radial Bach flatness originated
from \cite{p-w}, in which Petersen and Wiley defined the notion of a radially flat curvature, a radially
 and conformally flat curvature, or radial Ricci flatness for gradient Ricci solitons.
As mentioned in the introduction, Qing and Yuan (\cite{QY}) classified Bach-flat vacuum static spaces $(M, g, f)$ with compact level sets of $f$. For the proof, they introduced a
$3$-tensor that is identical to $fT$ in our case, which is defined in Lemma 3.1. By showing an integral identity (Proposition 2.3), they proved that the tensor $T$ must be vanishing
for Bach-flat vacuum static spaces. We can also show the same identity (Lemma~\ref{lem2018-2-10-2}), and
we will give a proof for self-containedness.
In view of those identities, we can see that, for a vacuum static space, 
the vanishing of $T$ is implied only by the vanishing of $i_{\n f}B$.
We also prove the converse under slightly weaker conditions. That is, for a vacuum static space
 satisfying ${\rm div}^2 B = 0$,
the condition ${\rm div}^3 T=0$ implies $(M, g)$ is Bach-flat.
 In view of Proposition 2.4, we can see that the vanishing of the Cotton tensor does not imply Bach flatness nor
 the vanishing of the tensor $T$ for a vacuum static space, even though it satisfies ${\rm div}^2 B = 0$.

First, we present  various properties of the Bach tensor $B$,  tensor $T$, and their divergences for vacuum static spaces $(M, g, f)$. As an application, we will prove
a rigidity for compact vacuum static spaces with radial Bach flatness by using the maximum principle.

\begin{lemma}\label{lem2018-2-10-1}
Let $(g, f)$ be a solution of (\ref{static1}). Then,
$$
{\rm div} ({\tilde i}_{\n f} {\mathcal W})  =  \frac{n-3}{n-2} \widehat C 
- f {\mathring {\mathcal W}}z,
$$
where ${\widehat C}$ is a $2$-tensor defined as
$$
{\widehat C}(X, Y) = C(Y, \n f, X)
$$
for any vectors $X, Y$. 
\end{lemma}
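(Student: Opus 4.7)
The approach is to compute the divergence of $\tilde i_{\n f}\mathcal W$ directly in a local orthonormal frame $\{E_i\}$ that is geodesic at the evaluation point. Using the definition $\tilde i_{\n f}\mathcal W(X,Y,Z)=\mathcal W(X,Y,Z,\n f)$ and applying the Leibniz rule to $\sum_i D_{E_i}(\tilde i_{\n f}\mathcal W)(E_i,Y,Z)$, I would split the result as
$$
{\rm div}(\tilde i_{\n f}\mathcal W)(Y,Z)
= \sum_i (D_{E_i}\mathcal W)(E_i,Y,Z,\n f)
+ \sum_i \mathcal W(E_i,Y,Z,D_{E_i}\n f),
$$
and then handle the two pieces separately, using (\ref{eqn2018-12-13-1}) for the first and the vacuum static equation (\ref{eqn2018-12-13-2}) for the second.

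For the first piece, I would invoke (\ref{eqn2018-12-13-1}), ${\rm div}\,\mathcal W=\frac{n-3}{n-2}\,C$. Written in index form, the contracted second Bianchi identity, together with the symmetry of the Ricci tensor, yields $\sum_i D_{E_i}\mathcal W(E_i,Y,Z,W)=\frac{n-3}{n-2}\,C(Z,W,Y)$: the divergenced slot of $\mathcal W$ migrates to the third (non-skew) slot of $C$ under the identification $\Gamma(T^*M\otimes\Lambda^2M)\simeq \Gamma(\Lambda^2M\otimes T^*M)$ mentioned just above (\ref{eqn2018-12-13-1}). Contracting the remaining $W$ slot against $\n f$ then produces $\frac{n-3}{n-2}\,C(Z,\n f,Y)=\frac{n-3}{n-2}\,\widehat C(Y,Z)$ by the very definition of $\widehat C$. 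Getting this slot matching right is the main delicate point in the proof.

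For the second piece, I would substitute (\ref{eqn2018-12-13-2}) in the form $Ddf=fz-\frac{sf}{n(n-1)}g$, so that $D_{E_i}\n f$ has components $f\,z(E_i,\cdot)-\frac{sf}{n(n-1)}E_i$. The $g$-contribution collapses to $-\frac{sf}{n(n-1)}\sum_i \mathcal W(E_i,Y,Z,E_i)$, which vanishes because $\mathcal W$ is totally trace-free. The $z$-contribution becomes $f\sum_{i,l}\mathcal W(E_i,Y,Z,E_l)\,z(E_i,E_l)$; applying the antisymmetry $\mathcal W(E_i,Y,Z,E_l)=-\mathcal W(Y,E_i,Z,E_l)$ of Weyl in the first pair together with the symmetry of $z$, one recognizes this as $-f\,\mathring{\mathcal W}z(Y,Z)$ in the convention of Section~2. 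Summing the two pieces yields the claimed identity; the main obstacle is not conceptual but the careful tracking of slot positions and signs needed to identify the Cotton contribution with $\widehat C$ and the Weyl/$z$ contraction with $-\mathring{\mathcal W}z$.
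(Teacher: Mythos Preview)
Your proposal is correct and follows essentially the same approach as the paper: compute in a geodesic frame, apply the Leibniz rule to split ${\rm div}(\tilde i_{\n f}\mathcal W)$ into a $D\mathcal W$ term and a $Ddf$ term, identify the first with $\frac{n-3}{n-2}\widehat C$ via (\ref{eqn2018-12-13-1}) and the second with $-f\mathring{\mathcal W}z$ via (\ref{eqn2018-12-13-2}) and the tracelessness of $\mathcal W$. Your discussion of the slot matching for the Cotton term is accurate and is indeed the only delicate point.
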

\begin{proof}
Let $\{E_i\}$ be a local geodesic  frame on $M$. From (\ref{eqn2018-12-13-1})
 and (\ref{eqn2018-12-13-2}), together with the fact that  ${\rm tr}_{14} {\mathcal W} =0$,
we have
\bea
 {\rm div} ({\tilde i}_{\n f} {\mathcal W}) (X, Y) 
 &=&   E_i({\mathcal W}(E_i, X, Y, \n f))\\
 &=&
 D_{E_i}{\mathcal W}(E_i, X, Y, \n f) + {\mathcal W}(E_i, X, Y, D_{E_i}d f)\\
 &=&
 {\rm div} {\mathcal W}(X, Y, \n f) + Ddf(E_i, E_j)  {\mathcal W}(E_i, X, Y, E_j)\\
    &=&
       \frac{n-3}{n-2}C(Y, \n f, X) - f {\mathring {\mathcal W}}z (X, Y).
 \eea 

\end{proof}

By a straightforward computation, we obtain the following.
\begin{lemma}\label{lem2018-9-25-3}
Let $(M, g, f)$ be a non-trivial vacuum static space. Then,
\be 
|T|^2 = \frac{2}{n-2}\langle i_{\n f} T, z\rangle\label{eqn1-1}
\ee
and
\be
|T|^2 = \frac{2}{(n-2)^2}|\n f|^2 \left(|z|^2 - \frac{n}{n-1}|i_{N} z|^2\right).\label{eqn2} 
\ee 
\end{lemma}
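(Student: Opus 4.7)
The plan is to verify both identities by direct tensorial computation from (\ref{defnt}), using essentially nothing beyond the tracelessness $\mathrm{tr}_g z = 0$, the symmetry of $z$, and the standard squared-norm identity
\[
|\phi\wedge\eta|^2 \;=\; 2|\phi|^2|\eta|^2 - 2|\eta(\phi^\sharp,\cdot)|^2
\]
together with its polarization
\[
\langle\phi_1\wedge\eta_1,\phi_2\wedge\eta_2\rangle \;=\; 2\langle\phi_1,\phi_2\rangle\langle\eta_1,\eta_2\rangle - 2\langle\eta_1(\phi_2^\sharp,\cdot),\eta_2(\phi_1^\sharp,\cdot)\rangle,
\]
valid for any $1$-forms $\phi,\phi_i$ and symmetric $2$-tensors $\eta,\eta_i$. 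Both follow at once from the antisymmetrization definition of $\phi\wedge\eta$ stated after (\ref{eqn03-1}) by expanding in a local orthonormal frame.

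To prove (\ref{eqn2}), I would expand $(n-2)^2|T|^2$ via (\ref{defnt}) into three pieces. The identity above gives $|df\wedge z|^2 = 2|\nabla f|^2|z|^2 - 2|i_{\nabla f}z|^2$ and $|i_{\nabla f}z\wedge g|^2 = 2(n-1)|i_{\nabla f}z|^2$. The cross term $\langle df\wedge z, i_{\nabla f}z\wedge g\rangle$ is the one delicate point: one summand of the polarization formula carries the factor $\langle z, g\rangle = \mathrm{tr}_g z = 0$ and so drops out, while the other reduces to $|i_{\nabla f}z|^2$ (using $g(\nabla f,\cdot) = df$ and the symmetry of $z$), yielding $-2|i_{\nabla f}z|^2$. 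With the weights $1$, $\tfrac{2}{n-1}$, $\tfrac{1}{(n-1)^2}$ dictated by (\ref{defnt}), the coefficient of $|i_{\nabla f}z|^2$ becomes $-2 - \tfrac{4}{n-1} + \tfrac{2}{n-1} = -\tfrac{2n}{n-1}$, so that $(n-2)^2|T|^2 = 2|\nabla f|^2|z|^2 - \tfrac{2n}{n-1}|i_{\nabla f}z|^2$. Rewriting $|i_{\nabla f}z|^2 = |\nabla f|^2|i_N z|^2$ gives (\ref{eqn2}).

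For (\ref{eqn1-1}), I would compute $\langle i_{\nabla f}T, z\rangle$ directly. Setting $X = \nabla f$ in (\ref{defnt}) and pairing with $z$ at the remaining slots, the $df\wedge z$ block contributes $\tfrac{1}{n-2}\bigl[|\nabla f|^2|z|^2 - (z\circ z)(\nabla f,\nabla f)\bigr]$ after invoking symmetry of $z$, and $(z\circ z)(\nabla f,\nabla f) = |i_{\nabla f}z|^2$; the $i_{\nabla f}z\wedge g$ block contributes $\tfrac{1}{(n-1)(n-2)}\bigl[z(\nabla f,\nabla f)\,\mathrm{tr}_g z - |i_{\nabla f}z|^2\bigr]$, whose first summand again vanishes by tracelessness of $z$. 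Multiplying the sum by $\tfrac{2}{n-2}$ reproduces exactly the right-hand side of (\ref{eqn2}), and hence equals $|T|^2$ by the previous step.

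The whole calculation is routine; the only subtlety is the repeated appeal to $\mathrm{tr}_g z = 0$, without which the stray terms $\langle z,g\rangle$ and $z(\nabla f,\nabla f)\,\mathrm{tr}_g z$ would survive and prevent the coefficients from aligning. I do not expect any genuine obstacle.
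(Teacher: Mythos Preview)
Your proposal is correct and is precisely the ``straightforward computation'' that the paper alludes to but does not write out; the paper offers no proof beyond that phrase. Your use of the polarization identity for $\phi\wedge\eta$ and the tracelessness of $z$ is exactly what is needed, and all coefficients check.
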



\begin{lemma}\label{lem2018-2-10-2} 
Let $(M, g, f)$ be a complete vacuum static space with compact level sets of $f$.  Then, 
$$ 
\int_{M_{t,t'}} f^2 B(\nabla f, \nabla f) =-\frac {n-1}2 \int_{M_{t,t'}} f^2 |T|^2
$$ 
for any regular values $t$ and $t'$ with $t<t'$. 
\end{lemma}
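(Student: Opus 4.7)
The plan is to derive a pointwise identity expressing $f^{2}B(\nabla f,\nabla f)$ in terms of ${\rm div}\,T(\nabla f,\nabla f)$, and then integrate by parts, with the compactness of the level sets of $f$ ensuring that the boundary integrals are well defined and in fact vanish.

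First I would take the divergence of the identity $fC=\tilde{i}_{\nabla f}\mathcal{W}-(n-1)T$ given by Lemma~\ref{lem2018-9-5-3}. Applying the Leibniz rule on the left gives ${\rm div}(fC)=i_{\nabla f}C+f\,{\rm div}\,C$, while Lemma~\ref{lem2018-2-10-1} rewrites ${\rm div}(\tilde{i}_{\nabla f}\mathcal{W})$ as $\tfrac{n-3}{n-2}\widehat{C}-f\,\mathring{\mathcal{W}}z$. Combining these with $(n-2)B={\rm div}\,C+\mathring{\mathcal{W}}z$ from (\ref{bach2}) yields
$$(n-2)fB=\frac{n-3}{n-2}\widehat{C}-i_{\nabla f}C-(n-1){\rm div}\,T.$$
Evaluating at $(\nabla f,\nabla f)$, both $\widehat{C}(\nabla f,\nabla f)$ and $i_{\nabla f}C(\nabla f,\nabla f)$ reduce to $C(\nabla f,\nabla f,\nabla f)$, which vanishes by antisymmetry of $C$ in its first two slots. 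Multiplying by $f$ I obtain the pointwise identity
$$f^{2}B(\nabla f,\nabla f)=-\frac{n-1}{n-2}\,f\cdot {\rm div}\,T(\nabla f,\nabla f).$$

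Next I would integrate the right-hand side by considering the vector field $V$ with components $V_{i}=fT(E_{i},\nabla f,\nabla f)$ in a local geodesic frame. Expanding ${\rm div}\,V$ produces four contributions: $T(\nabla f,\nabla f,\nabla f)$, which vanishes by antisymmetry; the desired term $f\,{\rm div}\,T(\nabla f,\nabla f)$; and two Hessian terms of the form $f\sum_{i}T(E_{i},D_{E_{i}}\nabla f,\nabla f)$ and $f\sum_{i}T(E_{i},\nabla f,D_{E_{i}}\nabla f)$. Substituting $Ddf=fz-\tfrac{sf}{n(n-1)}g$ from (\ref{eqn2018-12-13-2}), the first Hessian term vanishes because $T(\cdot,\cdot,\nabla f)$ is antisymmetric in its first two slots while $z$ is symmetric, and the trace part $\sum_{i}T(E_{i},E_{i},\nabla f)$ vanishes. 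For the second Hessian term, a short computation (using the antisymmetry of $T$ in the first two slots to rewrite $T(E_{i},\nabla f,E_{j})=-i_{\nabla f}T(E_{i},E_{j})$, and the fact that $\sum_{i}T(E_{i},\nabla f,E_{i})=0$) reduces it to $-f^{2}\langle i_{\nabla f}T,z\rangle$, which by Lemma~\ref{lem2018-9-25-3} equals $-\tfrac{n-2}{2}f^{2}|T|^{2}$.

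Finally I would apply the divergence theorem on $M_{t,t'}$. The boundary integral is $\int_{\partial M_{t,t'}}f\,T(\nu,\nabla f,\nabla f)$, where $\nu$ is parallel to $\nabla f$ on each level set; hence it vanishes identically by antisymmetry of $T$. Combining these steps gives
$$\int_{M_{t,t'}}f\,{\rm div}\,T(\nabla f,\nabla f)=\frac{n-2}{2}\int_{M_{t,t'}}f^{2}|T|^{2},$$
which substituted into the pointwise identity above yields the desired equality. The only delicate point is bookkeeping in the expansion of ${\rm div}\,V$: verifying carefully that every trace and every symmetric/antisymmetric contraction collapses in the expected way, so that exactly one term ${\rm div}\,T(\nabla f,\nabla f)$ and one term proportional to $|T|^{2}$ survive.
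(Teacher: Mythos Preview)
Your proof is correct and follows essentially the same route as the paper's. The only cosmetic difference is the order of operations: the paper multiplies (\ref{eqn09}) by $f$ \emph{before} taking the divergence, obtaining $f^{2}B(\nabla f,\nabla f)=-\tfrac{n-1}{n-2}\,{\rm div}(fT)(\nabla f,\nabla f)$, whereas you take the divergence first (arriving at what is Lemma~\ref{lem2018-2-14-11}) and then multiply by $f$; the subsequent expansion of ${\rm div}\bigl(fT(\cdot,\nabla f,\nabla f)\bigr)$, the use of Lemma~\ref{lem2018-9-25-3}, and the vanishing of the boundary term are identical in both arguments.
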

\begin{proof} 
From Lemma~\ref{lem2018-9-5-3}  and Lemma~\ref{lem2018-2-10-1},
\bea 
f^2 {\rm div} C &=& {\rm div} (f^2 C) - 2f i_{\n f}C\\
&=&
{\rm div} ( f\tilde{i}_{\nabla f}{\mathcal W}) - (n-1) {\rm div} (fT) - 2f i_{\n f} C\\ 
&=& 
i_{\n f}{\tilde i}_{\n f}{\mathcal W} + f {\rm div} \left({\tilde i}_{\n f}{\mathcal W}\right)
- (n-1)\,  {\rm div} (fT) - 2f\,  i_{\n f} C\\ 
&=& 
{\mathcal W}(\nabla f , \cdot , \cdot, \nabla f) + \frac {n-3}{n-2}\, f {\widehat C}
-  f^2\mathring{\mathcal W}z
  - (n-1)\,  {\rm div} (fT) - 2 f i_{\n f}C. 
\eea 
Therefore,  from  (\ref{bach2}), 
\bea 
(n-2)f^2B &=& f^2\,  {\rm div} C  + f^2\, \mathring{\mathcal W} z \\ 
&=& 
{\mathcal W}(\nabla f , \cdot , \cdot, \nabla f) +\frac {n-3}{n-2}f {\widehat C}
  - (n-1) {\rm div} (fT) - 2 f i_{\n f}C.
\eea 
In particular, 
$$ 
f^2 B(\nabla f, \nabla f)= - \frac {n-1}{n-2}\, {\rm div} (fT)(\nabla f, \nabla f). 
$$ 
Let  $\{E_1, \cdots, E_{n-1},  N= \frac{\n f}{|\n f|}\}$ be a local frame around a regular point of $f$.
Then,
\bea 
{\rm div} (fT)(\nabla f, \nabla f)
&=&
{E_i}(fT(E_i, \nabla f, \nabla f)) - fT(E_i, D_{E_i}df, \nabla f) 
 - fT(E_i, \nabla f, D_{E_i}df)\\ 
&=&
{E_i}(fT(E_i, \nabla f, \nabla f)) - f^2 T(E_i, \nabla f, E_k)z(E_i, E_k)\\ 
&=&
\mbox{div}(fT(\cdot, \nabla f, \nabla f)) +\frac {n-2}2 f^2 |T|^2. 
\eea 
In the second equality, we used the fact that $T(E_i, E_j, \n f) =0$, together with 
${\rm tr}_{12}T = {\rm tr}_{13}T = 0$ and (\ref{eqn2018-12-13-2}).
In the last equality, we used (\ref{eqn1-1}).
Therefore, by the divergence theorem we have
$$
-\frac {n-2}{n-1}\int_{M_{t,t'}} f^2 B(\nabla f, \nabla f) = \frac {n-2}2\int_{M_{t,t'}} f^2|T|^2=0, $$
implying our conclusion.
 \end{proof}

Next, we will show that, for a non-trivial vacuum static space $(M, g, f)$ satisfying ${\rm div}^2 B=0$,
the vanishing of $B$ is equivalent to $B(\n f, \n f) = 0$.
First, from the definition of $T$, we have, on each level hypersurface
$f^{-1}(t)$ for a regular value $t$ of $f$, 
\be 
i_{\n f} T 
=
\frac{1}{n-2}|\nabla f|^2 z + \frac{1}{(n-1)(n-2)} z(\n f, \n f) g  
= 
\frac{|\nabla f|^2}{n-2}\left( z + \frac{\a }{n-1} g\right),\label{eqn2017-5-26-3-1} 
\ee 
where $\a:= z(N, N) = z \left(\frac{\n f}{|\n f|}, \frac{\n f}{|\n f|}\right)$. Note that the function $\a$ is defined only on the set $M \setminus {\rm Crit}(f)$, where
${\rm Crit}(f)$ is the set of all critical points of $f$. However, since $|\a|\le |z|$, $\a$
can be extended to a $C^0$ function on the whole manifold $M$
 (for more details, see \cite{ych2}). Therefore, when $T=0$,  it follows from (\ref{eqn2017-5-26-3-1}) that
\be
z (E_i, E_j) = - \frac{\a}{n-1}\delta_{ij} \label{eqn2018-3-22-1}
\ee
for a local frame $\{E_1, \cdots, E_{n-1},  N= \frac{\n f}{|\n f|}\}$.
Moreover, by substituting the triple $(\n f, X, \n f)$ into $T$  for a vector field $X$ with 
$\langle X, \n f\rangle =0$, we obtain
$$ 
|\nabla f|^2 z(X, \n f) - \frac{1}{n-1}z(\n f, X)|\nabla f|^2 = 0, 
$$ 
which implies 
\bea
z(\n f, X) = 0 
\eea
 for any vector $X$ with $X \perp \n f$. Thus,
the traceless Ricci tensor $z$ can be expressed as a diagonal matrix and
\bea 
|z|^2 = \frac{n}{n-1}\a^2. 
\eea
We can also derive this identity from (\ref{eqn2}) in Lemma~\ref{lem2018-9-25-3}.

\begin{lemma}\label{lem2018-2-14-11}
Let   $(M, g, f)$ be an $n$-dimensional vacuum static space. Then,
\be
(n-2) f B = -i_{\n f}C + \frac{n-3}{n-2}{\widehat C} - (n-1)\,  {\rm div}\,  T.\label{eqn2017-6-12-11}
\ee
\end{lemma}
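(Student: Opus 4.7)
The plan is to take the divergence of the identity in Lemma~\ref{lem2018-9-5-3} and then match the result against formula (\ref{bach2}) multiplied by $f$.

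First, I would start from Lemma~\ref{lem2018-9-5-3}, namely $fC = \tilde{i}_{\nabla f}{\mathcal W} - (n-1)T$, and apply the divergence operator to both sides. The left-hand side expands by the product rule as ${\rm div}(fC) = i_{\nabla f}C + f\,{\rm div}\,C$ (with an appropriate sign convention, using that $f$ is a scalar and $C$ has $df$ contracted into the first slot via $i_{\nabla f}C$). The first term on the right is handled directly by Lemma~\ref{lem2018-2-10-1}, which gives ${\rm div}(\tilde{i}_{\nabla f}{\mathcal W}) = \frac{n-3}{n-2}\widehat C - f\,\mathring{\mathcal W}z$. Combining these and solving for $f\,{\rm div}\,C$ yields
\[
f\,{\rm div}\,C = -i_{\nabla f}C + \frac{n-3}{n-2}\widehat C - f\,\mathring{\mathcal W}z - (n-1)\,{\rm div}\,T.
\]

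Next, I would multiply identity (\ref{bach2}), i.e.\ $(n-2)B = {\rm div}\,C + \mathring{\mathcal W}z$, by $f$ to obtain $(n-2)fB = f\,{\rm div}\,C + f\,\mathring{\mathcal W}z$. Substituting the expression for $f\,{\rm div}\,C$ derived above cancels the $f\,\mathring{\mathcal W}z$ term and produces exactly $(n-2)fB = -i_{\nabla f}C + \frac{n-3}{n-2}\widehat C - (n-1)\,{\rm div}\,T$, which is the desired identity.

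There is no real obstacle here beyond careful bookkeeping: the only subtlety is verifying the sign and placement convention for $i_{\nabla f}C$ when computing ${\rm div}(fC)$, and ensuring the index identification of $\tilde{i}_{\nabla f}{\mathcal W}$ with its divergence matches the definition of $\widehat C(X,Y) = C(Y,\nabla f, X)$ used in Lemma~\ref{lem2018-2-10-1}. In essence this lemma is a pointwise refinement of the integrated identity already established in Lemma~\ref{lem2018-2-10-2}, and the proof mirrors that computation but stops one step earlier, before integrating against $f$ and applying the divergence theorem.
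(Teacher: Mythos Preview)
Your proposal is correct and follows essentially the same approach as the paper: both take the divergence of the identity $fC = \tilde{i}_{\nabla f}{\mathcal W} - (n-1)T$ from Lemma~\ref{lem2018-9-5-3}, invoke Lemma~\ref{lem2018-2-10-1} to handle ${\rm div}(\tilde{i}_{\nabla f}{\mathcal W})$, and then substitute into $f$ times (\ref{bach2}). The paper's write-up is terser but the logic is identical.
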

\begin{proof}
By taking the divergence of both sides in (\ref{eqn09}) and using Lemma~\ref{lem2018-2-10-1}, we obtain
$$
f\left({\mathring {\mathcal W}}z + {\rm div}\,  C\right) = -i_{\n f}C +   
\frac{n-3}{n-2}{\widehat C} - (n-1)\, {\rm div} \, T.
$$
Therefore, (\ref{eqn2017-6-12-11}) follows from (\ref{bach2}).
\end{proof}

\begin{lemma}\label{lem2018-2-10-3}
We have
\be
{\rm div}^2 \, T (X) = \frac{1}{n-2}\,  f\langle i_X C, z\rangle + \langle i_X T, z\rangle\label{eqn2018-2-9-1}
\ee 
for any vector $X$.
\end{lemma}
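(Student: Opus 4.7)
The plan is to take the divergence of the explicit formula for ${\rm div}\,T$ obtained in Lemma~\ref{lem2018-2-14-11}, namely $(n-1)\,{\rm div}\,T = -(n-2)fB - i_{\nabla f}C + \frac{n-3}{n-2}\widehat C$, and to reduce the three resulting pieces using (\ref{bach2}) and Lemma~\ref{lem2018-9-5-3}.

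Concretely, evaluating at $X$ gives
\[
(n-1)\,{\rm div}^2 T(X) = -(n-2)\,{\rm div}(fB)(X) - {\rm div}(i_{\nabla f}C)(X) + \frac{n-3}{n-2}\,{\rm div}\widehat C(X).
\]
The first piece is routine: ${\rm div}(fB)(X) = B(\nabla f, X) + f\,{\rm div}\,B(X)$, and Proposition~\ref{lem2018-9-25-1} supplies $f\,{\rm div}\,B(X) = \frac{(n-4)f}{(n-2)^2}\langle i_XC, z\rangle$. For the other two I would expand the covariant derivatives, substitute $D_if_j = fz_{ij} - \frac{sf}{n(n-1)}\delta_{ij}$ from (\ref{eqn2018-12-13-2}), and repeatedly exploit the antisymmetry $C_{ijk}=-C_{jik}$ (so in particular $z_{ij}C_{ijx}=0$), the tracelessness $\sum_iC_{iix}=0$, and the first Bianchi identity $C_{abc}+C_{bca}+C_{cab}=0$ coming from $C=d^Dz$. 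Together with the symmetry of ${\rm div}\,C$ -- a consequence of (\ref{bach2}) together with the symmetry of $B$ and $\mathring{\mathcal W}z$ -- these reductions should yield ${\rm div}(i_{\nabla f}C)(X) = -({\rm div}\,C)(\nabla f, X)$ and ${\rm div}\widehat C(X) = f\langle i_XC, z\rangle$.

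Plugging these back and then using (\ref{bach2}) in the form $(n-2)B(\nabla f, X) = ({\rm div}\,C)(\nabla f, X) + (\mathring{\mathcal W}z)(\nabla f, X)$ will cause the ${\rm div}\,C$ contributions to cancel and collapse the $\langle i_XC, z\rangle$ coefficients to $\frac{1}{n-2}$, leaving
\[
(n-1)\,{\rm div}^2 T(X) = -(\mathring{\mathcal W}z)(\nabla f, X) + \frac{f}{n-2}\langle i_XC, z\rangle.
\]
The closing step is the algebraic identity $\langle i_X\tilde i_{\nabla f}\mathcal W, z\rangle = -(\mathring{\mathcal W}z)(\nabla f, X)$, which follows at once from the skew and pair symmetries of $\mathcal W$. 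Combining this with Lemma~\ref{lem2018-9-5-3} rearranged as $\tilde i_{\nabla f}\mathcal W = fC + (n-1)T$ gives $\langle i_X\tilde i_{\nabla f}\mathcal W, z\rangle = f\langle i_XC, z\rangle + (n-1)\langle i_XT, z\rangle$, and a final regrouping of the $\langle i_XC, z\rangle$ coefficients produces the stated identity after dividing by $n-1$.

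The main obstacle is pure bookkeeping: many naive Cotton-tensor contractions are forced to zero by symmetry considerations, but identifying which ones survive and with which sign demands disciplined index conventions, especially in the computation of ${\rm div}\widehat C$ where the first Bianchi identity has to be invoked to rewrite $C_{xki}$ before the divergence is taken. The decisive structural step is recognizing Lemma~\ref{lem2018-9-5-3} as the bridge that turns the Weyl-curvature residue $\mathring{\mathcal W}z(\nabla f, \cdot)$ back into a combination of $fC$ and $(n-1)T$, exactly matching the right-hand side of the lemma.
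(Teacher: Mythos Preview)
Your proposal is correct and follows essentially the same route as the paper: both take the divergence of the identity in Lemma~\ref{lem2018-2-14-11}, use ${\rm div}(i_{\nabla f}C) = -i_{\nabla f}\,{\rm div}\,C$ and ${\rm div}\,\widehat C(X) = f\langle i_XC, z\rangle$, substitute (\ref{bach2}) and Proposition~\ref{lem2018-9-25-1} to arrive at $(n-1)\,{\rm div}^2 T(X) = \frac{1}{n-2}f\langle i_XC, z\rangle - \mathring{\mathcal W}z(\nabla f, X)$, and then close with the algebraic identity $\langle i_X\tilde i_{\nabla f}\mathcal W, z\rangle = -\mathring{\mathcal W}z(\nabla f, X)$ combined with Lemma~\ref{lem2018-9-5-3}. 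The only cosmetic difference is that you expand ${\rm div}(fB)$ as a product first, whereas the paper rearranges (\ref{eqn2017-6-12-11}) before differentiating; the substance is identical.
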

\begin{proof}
First, note that
\be
\langle i_X {\tilde i}_{\n f}{\mathcal W}, z\rangle = - {\mathring {\mathcal W}}z(\n f, X).\label{eqn2018-2-14-13}
\ee
Thus, by taking the divergence of both sides of (\ref{eqn2017-6-12-11}) and using the fact that
$\langle z, {\tilde i}_X C\rangle =0$ and ${\rm div}\, i_{\n f}C = - i_{\n f}{\rm div}C$, 
together with (\ref{eqn2018-12-13-2}), we obtain
\bea
(n-2)i_{\n f}B + (n-2) f {\rm div} B =
i_{\n f} {\rm div} C + \frac{n-3}{n-2} {\rm div} {\widehat C}  - (n-1)\,  {\rm div}^2\,  T.
\eea
Thus,
\bea
 (n-1){\rm div}^2 T(X) = -(n-2) B(\n f, X) - (n-2) f {\rm div}\,  B(X) + {\rm div}\,  C(\n f, X) 
 +  \frac{n-3}{n-2} {\rm div}\,  {\widehat C}(X).
\eea
Since from (\ref{bach2}), ${\rm div}\, C$ is a symmetric $2$-tensor, by using (\ref{eqn2018-12-13-2}), 
one can also compute
\bea
{\rm div}\,  {\widehat C} (X) = f \langle i_X C, z\rangle.\label{eqn2018-2-9-2-10}
\eea
By substituting these in Proposition~\ref{em2018-9-25-1} and replacing $B$ with (\ref{bach2}), we obtain
$$
 (n-1){\rm div}^2 T(X) = \frac{1}{n-2}f \langle i_X C, z\rangle 
 - {\mathring {\mathcal W}}z(\n f, X).
 $$
 Finally, from Lemma~\ref{lem2018-9-5-3}  together with (\ref{eqn2018-2-14-13}), we have
 $$
 (n-1) \langle i_XT, z\rangle + f \langle i_X C, z\rangle 
 = \langle i_X {\tilde i}_{\n f}{\mathcal W}, z\rangle 
 = - {\mathring {\mathcal W}}z(\n f, X).
$$
Hence,
 $$
 (n-1){\rm div}^2 T\,  (X) =
\frac{n-1}{n-2} f \langle i_X C, z\rangle + (n-1)\langle i_X T, z\rangle.
$$
\end{proof}

\begin{lemma}\label{lem2018-1-20-11-1} 
 Let $(M, g, f)$ be an $n$-dimensional vacuum static space. If $T = 0$, then
\begin{itemize} 
\item[(1)] $\langle i_X C, z \rangle = 0$ for any vector $X$; therefore, $ {\rm div} \, B = 0$.
\item[(2)] $\langle {\mathcal W}_N, z\rangle = 0$, where ${\mathcal W}_N$ is defined by
${\mathcal W}_N(X, Y) = {\mathcal W}(N, X, N, Y)$ with $N = \frac{\n f}{|\n f|}$.
\end{itemize} 
\end{lemma}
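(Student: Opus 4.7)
The plan is to deduce both statements almost immediately from the identities already established, treating $T=0$ as a degeneracy that forces the right-hand sides of those identities to collapse.

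For part (1), I would invoke Lemma~\ref{lem2018-2-10-3}, which says
$$
{\rm div}^2 T(X) = \frac{1}{n-2}\,f\langle i_X C, z\rangle + \langle i_X T, z\rangle.
$$
Setting $T = 0$ kills both ${\rm div}^2 T(X)$ and $\langle i_X T, z\rangle$, leaving $f\langle i_X C, z\rangle \equiv 0$ on $M$ for every vector $X$. Because $f$ is a non-trivial solution of the vacuum static equation, $f$ has no critical points on $f^{-1}(0)$; hence $f^{-1}(0)$ is a smooth hypersurface of measure zero, and $\{f \neq 0\}$ is open and dense. Since $\langle i_X C, z\rangle$ is continuous, it must vanish on all of $M$. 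Finally, Proposition~\ref{lem2018-9-25-1} gives ${\rm div}\, B(X) = \tfrac{n-4}{(n-2)^2}\langle i_X C, z\rangle = 0$.

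For part (2), I would use Lemma~\ref{lem2018-9-5-3}: when $T = 0$, equation (\ref{eqn09}) reduces to $fC = \tilde{i}_{\n f}{\mathcal W}$. Contracting this identity with $z$ after applying $i_N$ yields
$$
f\,\langle i_N C, z\rangle = \langle i_N \tilde{i}_{\n f}{\mathcal W}, z\rangle.
$$
Part (1) makes the left-hand side vanish. For the right-hand side, I would apply the identity (\ref{eqn2018-2-14-13}) with $X = N$, namely $\langle i_N \tilde{i}_{\n f}{\mathcal W}, z\rangle = -\mathring{{\mathcal W}}z(\n f, N) = -|\n f|\,\mathring{{\mathcal W}}z(N, N)$, and then observe that $\mathring{{\mathcal W}}z(N, N) = \sum_{i,j}{\mathcal W}(N, E_i, N, E_j)\,z(E_i, E_j) = \langle {\mathcal W}_N, z\rangle$ by the definition of ${\mathcal W}_N$ and the symmetry of $z$. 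Combining these, $|\n f|\,\langle {\mathcal W}_N, z\rangle = 0$, and since $N$ is only defined where $|\n f| \neq 0$, we conclude $\langle {\mathcal W}_N, z\rangle = 0$.

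Neither part requires genuinely new computation beyond what is already in the preceding lemmas, so there is no serious obstacle. The only place demanding care is the bookkeeping that rewrites $\langle i_N\tilde{i}_{\n f}{\mathcal W}, z\rangle$ in terms of $\langle {\mathcal W}_N, z\rangle$; this uses the pair-swap symmetry and the antisymmetry of ${\mathcal W}$ in its last two entries, but (\ref{eqn2018-2-14-13}) already packages this work, so the verification reduces to a sign check.
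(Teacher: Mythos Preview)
Your proof is correct and follows essentially the same approach as the paper's: both parts pivot on Lemma~\ref{lem2018-2-10-3} for (1) and on contracting (\ref{eqn09}) with $i_{\n f}$ (or $i_N$) and $z$ for (2), using (\ref{eqn2018-2-14-13}) for the final identification. Your continuity argument handling $f^{-1}(0)$ in part (1) is a detail the paper leaves implicit, and your use of $i_N$ rather than $i_{\n f}$ in part (2) is a harmless normalization difference.
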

\begin{proof} 
From Lemma~\ref{lem2018-2-10-3},
we have $\langle i_X C, z \rangle = 0$ for any vector $X$; therefore, from Propostion~\ref{lem2018-9-25-1}  and
(\ref{eqn2018-12-13-6}),
$$ 
{\rm div} B = 0\quad \mbox{and}\quad \langle T, C\rangle = 0. 
$$ 
In particular, since $\langle i_{\n f}C , z\rangle =0$, from (\ref{eqn09}), 
\bea 
0 = (n-1)\langle i_{\n f}T, z\rangle = \langle i_{\n f}{\tilde i}_{\n f}{\mathcal W}, z\rangle 
= - |\n f|^2 \langle {\mathcal W}_N, z\rangle.\label{eqn2018-2-10-9} 
\eea
\end{proof}

\begin{prop}\label{prop2018-1-20-11}
 Let $(M, g, f)$ be an $n$-dimensional vacuum static space satisfying ${\rm div}^2 B=0$.
 If $T = 0$, then $B=0$ and $C=0$.
\end{prop}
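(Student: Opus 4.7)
The plan is to show $C=0$ first; then $B=0$ follows immediately from Lemma~\ref{lem2018-2-14-11}, since with $T=0$ and $C=0$ that identity reduces to $(n-2)fB=0$, forcing $B=0$ on $\{f\neq 0\}$ and hence on all of $M$ by continuity. The central idea is that $T=0$ imposes a rigid diagonal structure on the traceless Ricci tensor $z$, which in turn makes both $B$ and ${\rm div}\,C$ scalar multiples of a single tensor ${\mathcal W}_N$, so that their pairings with $z$ vanish automatically by Lemma~\ref{lem2018-1-20-11-1}(2).

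First I would exploit this rigidity: feeding $T=0$ into (\ref{eqn2017-5-26-3-1}) shows that in a local frame $\{E_1,\dots,E_{n-1},N\}$ with $N=\n f/|\n f|$, the tensor $z$ is diagonal with $z(N,N)=\a$ and $z(E_i,E_j)=-\frac{\a}{n-1}\d_{ij}$. Using the Weyl trace-free property $\sum_i {\mathcal W}(X,E_i,Y,E_i)=0$, this diagonal form yields $\mathring{\mathcal W}z=\frac{n\a}{n-1}{\mathcal W}_N$, and re-derives $\langle {\mathcal W}_N,z\rangle=0$ as an automatic consequence of the structure of $z$.

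Next I would combine Lemma~\ref{lem2018-2-14-11} (with $T=0$), which reads $(n-2)fB=-i_{\n f}C+\frac{n-3}{n-2}\widehat{C}$, with Lemma~\ref{lem2018-9-5-3} (with $T=0$), which gives $fC=\tilde{i}_{\n f}{\mathcal W}$. Multiplying the first identity by $f$ and substituting the second, the Weyl symmetries turn each of $-f\,i_{\n f}C$ and $f\widehat{C}$ into $|\n f|^2{\mathcal W}_N$, producing
\[
f^2 B \;=\; \frac{2n-5}{(n-2)^2}\,|\n f|^2\,{\mathcal W}_N
\qquad \text{on } \{f\neq 0\}.
\]
Plugging this into (\ref{bach2}) together with $\mathring{\mathcal W}z=\frac{n\a}{n-1}{\mathcal W}_N$ expresses ${\rm div}\,C$ as a scalar multiple of ${\mathcal W}_N$; taking the inner product with $z$ and using $\langle {\mathcal W}_N,z\rangle=0$ yields $\langle {\rm div}\,C,z\rangle=0$. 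Finally, Corollary~\ref{ddivb} together with the hypothesis ${\rm div}^2 B=0$ forces $\tfrac12|C|^2=0$, so $C=0$ on $\{f\neq 0\}$ and, by continuity, on all of $M$.

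The main obstacle I anticipate is the dimension sensitivity of Corollary~\ref{ddivb}: for $n=4$ the coefficient $(n-4)/(n-2)^2$ vanishes, so ${\rm div}^2 B=0$ is tautological in dimension four and cannot on its own force $|C|^2=0$. In the $n=4$ case the final step must be replaced by the direct hypothesis $\tfrac12|C|^2=-\langle {\rm div}\,C,z\rangle$, exactly as in Theorem~\ref{4dim1}; otherwise the statement should be interpreted as restricted to $n\geq 5$.
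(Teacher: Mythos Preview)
Your proof is correct and follows essentially the same route as the paper: both use $fC=\tilde{i}_{\nabla f}{\mathcal W}$ from Lemma~\ref{lem2018-9-5-3}, the diagonal structure of $z$ forced by $T=0$, the vanishing $\langle {\mathcal W}_N,z\rangle=0$ from Lemma~\ref{lem2018-1-20-11-1}(2), and finish via Corollary~\ref{ddivb} plus Lemma~\ref{lem2018-2-14-11}. The only difference is in how $\langle {\rm div}\,C,z\rangle=0$ is obtained. You establish the stronger tensorial facts that (on $\{f\neq 0\}$) both $f^2 B$ and $\mathring{\mathcal W}z$ are scalar multiples of ${\mathcal W}_N$, hence so is $f^2\,{\rm div}\,C$, and then pair with $z$. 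The paper instead takes the divergence of $fC=\tilde{i}_{\nabla f}{\mathcal W}$ directly to get
\[
f\bigl({\rm div}\,C+\mathring{\mathcal W}z\bigr)=-\tfrac{2n-5}{n-2}\,i_{\nabla f}C,
\]
evaluates this at $(N,N)$ to deduce ${\rm div}\,C(N,N)=0$, and then uses the diagonal form of $z$ together with the trace identity $\sum_{j=1}^{n-1}C_{ijj;i}=-C_{inn;i}$ to reduce $\langle {\rm div}\,C,z\rangle$ to $\frac{n\alpha}{n-1}\,{\rm div}\,C(N,N)$. Your route yields a cleaner global tensor picture; the paper's route avoids computing $\mathring{\mathcal W}z$ in full and works only with the $(N,N)$ component. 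Your observation about $n=4$ is apt and applies equally to the paper's own argument, which likewise relies on Corollary~\ref{ddivb} at the final step.
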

\begin{proof} 
From Lemma~\ref{lem2018-9-5-3}, $T=0$ implies that
\be 
f C = {\tilde i}_{\n f} {\mathcal W}. \label{eqn2018-2-10-7-1} 
\ee 
Therefore,
\bea 
C(X, Y, \n f) = 0 \label{eqn2018-2-10-5-1} 
\eea 
for any vectors $X$ and $Y$. Since the cyclic summation of $C$ vanishes,
 we have 
$$ 
C(Y, \n f, X) + C(\n f, X, Y) = 0. 
$$ 
In other words,
\bea 
i_{\n f}C + \widehat C = 0.\label{eqn2018-2-10-6-1} 
\eea 
By taking the divergence of (\ref{eqn2018-2-10-7-1}) and 
using Lemma~\ref{lem201 -2-10-1}, we have 
\bea 
f\,  {\rm div}\, C + f \, {\mathring {\mathcal W}}z = - \frac{2n-5}{n-2}\, i_{\n f}C.\label{eqn2018-2-10-8-1} 
\eea
Since ${\mathring {\mathcal W}}z(N, N) = \langle {\mathcal W}_N, z\rangle=0$,
from Lemma~\ref{lem2018-1-20-11-1}, we have 
\bea
f \,{\rm div} \, C(N, N) = 0.\label{eqn2018-2-10-10-1} 
\eea
Finally,  from Corollary~\ref{ddivb} and (\ref{eqn2018-3-22-1}), together with  
$\displaystyle{\sum_{j=1}^{n-1}C_{ijj;i} = - C_{inn;i}}$, we have
\bea 
\frac{1}{2}|C|^2 &=& - \langle {\rm div} C, z\rangle = - C_{ijk;i}z_{jk}=
 \frac{\a}{n-1} \sum_{i=1}^n \sum_{j=1}^{n-1}C_{ijj;i} - \a \sum_{i=1}^n  C_{inn;i}\\
&=&
- \frac{n\a}{n-1} \sum_{i=1}^n  C_{inn:i} = - \frac{n\a}{n-1} {\rm div} \, C(N, N) =0. 
\eea 
Thus, $C = d^D z = 0$; therefore, $B = 0$ from Lemma~\ref{lem2018-2-14-11}.
\end{proof}

\begin{corollary} \label{cor122}
Let $(M, g, f)$ be an $n$-dimensional vacuum static space satisfying ${\rm div}^2 B=0$. Then,
$B(\n f, \n f) = 0$ if and only if $B=0$.
\end{corollary}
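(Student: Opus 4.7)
The plan is straightforward since most of the work has already been done by Lemma~\ref{lem2018-2-10-2} and Proposition~\ref{prop2018-1-20-11}. The forward direction is trivial: if $B = 0$, then certainly $B(\nabla f, \nabla f) = 0$. I will therefore focus on the converse, and my strategy will be to first deduce that the auxiliary $3$-tensor $T$ vanishes identically, and then invoke Proposition~\ref{prop2018-1-20-11} to conclude that $B = 0$.

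Assume $B(\nabla f, \nabla f) = 0$ on $M$. I would then apply the integral identity of Lemma~\ref{lem2018-2-10-2}, which gives
$$
-\frac{n-1}{2}\int_{M_{t,t'}} f^2 |T|^2 \;=\; \int_{M_{t,t'}} f^2\, B(\nabla f, \nabla f) \;=\; 0
$$
for every pair of regular values $t<t'$. Since the integrand $f^2|T|^2$ is non-negative, this forces $f^2|T|^2 \equiv 0$ on $M_{t,t'}$. Taking the union over all regular values of $f$ (which form a dense subset of the image of $f$ by Sard's theorem), we obtain $T \equiv 0$ on the open set $\{x\in M: f(x)\neq 0\}$.

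Next I would promote this to $T\equiv 0$ on all of $M$. Because $(M,g,f)$ is a non-trivial solution of the vacuum static equation (\ref{static1}), the zero set $f^{-1}(0)$ contains no critical points of $f$ (as noted in the introduction), so it is a smooth hypersurface with empty interior. By continuity of $T$, the identity $T = 0$ on the dense open set $\{f\neq 0\}$ extends to $T = 0$ on all of $M$. With $T \equiv 0$ and the standing assumption ${\rm div}^2 B = 0$, Proposition~\ref{prop2018-1-20-11} then delivers $B = 0$ (and in fact $C = 0$ as well), completing the argument.

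The only mildly delicate point is the propagation of the pointwise vanishing of $T$ across $f^{-1}(0)$; but this is essentially a continuity argument once one recalls that $\nabla f \neq 0$ on $f^{-1}(0)$, so the bulk of the proof amounts to chaining together Lemma~\ref{lem2018-2-10-2} and Proposition~\ref{prop2018-1-20-11} via the sign in the integral identity.
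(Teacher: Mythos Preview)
Your proposal is correct and follows exactly the approach the paper intends: the paper's proof is a single sentence invoking Lemma~\ref{lem2018-2-10-2} and Proposition~\ref{prop2018-1-20-11}, and you have simply spelled out the chain of implications (non-negativity of $f^2|T|^2$ in the integral identity forces $T=0$ on $\{f\neq 0\}$, continuity across the hypersurface $f^{-1}(0)$ gives $T\equiv 0$, and then Proposition~\ref{prop2018-1-20-11} yields $B=0$). Note that, like the paper, you are implicitly using the completeness and compact-level-set hypotheses required by Lemma~\ref{lem2018-2-10-2}, even though the corollary statement omits them.
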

\begin{proof}
The proof follows from Lemma~\ref{lem2018-2-10-2}  and
Proposition~\ref{prop2018-1-20-11}.
\end{proof}

{Next, we will show that, for a vacuum static space $(M, g, f)$ with
 $i_{\n f}B = 0$, the square norm of the traceless
 Ricci tensor, $|z|^2$, attains its maximum on the set $f^{-1}(0)$.
 As a corollary, if $|z|^2$ attains its local maximum on the set $M\setminus f^{-1}(0)$,
$(M, g)$ is Einstein and isometric to a standard sphere when $M$ is compact.}

\vskip .5pc

To show this property, we first compute  the divergence of the tensor $T$.

\begin{lemma}\label{lem2017-12-25-10-100-1} 
Let $(M, g, f)$ be an $n$-dimensional  vacuum static space. 
Then,
\bea 
(n-1)(n-2)\, {\rm div} \, T = - \frac{n-2}{n-1} sfz + (n-2)D_{\n f} z - \widehat C 
- n f z\circ z + f |z|^2 g.\label{eqn2017-12-7-2-1-100}
\eea 
\end{lemma}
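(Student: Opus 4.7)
The plan is to compute $\mathrm{div}\, T$ directly by expanding the definition (\ref{defnt}) in components and simplifying each resulting piece algebraically. Writing
\[
(n-2)\, T_{ijk} = f_i z_{jk} - f_j z_{ik} + \frac{1}{n-1}\bigl(z_{i\ell} f^\ell \delta_{jk} - z_{j\ell} f^\ell \delta_{ik}\bigr),
\]
the divergence $\n^i T_{ijk}$ splits into four summands. To reduce each of them I will use three facts: the vacuum static equation in the form $Ddf = fz - \frac{sf}{n(n-1)} g$ (equivalent to (\ref{eqn2018-12-13-2})), which rewrites every Hessian of $f$ algebraically in $z$; the constancy of $s$ on a vacuum static space, which via the contracted second Bianchi identity gives $\mathrm{div}\, z = 0$ and reduces $\tr_g f$ to $-\frac{sf}{n-1}$; and the Cotton identity $C = d^D z$ (valid because $s$ is constant), which lets me move the slot in which $z$ is differentiated.

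Three of the four pieces are routine substitutions. Differentiating the first term gives $(\tr_g f) z_{jk} + (D_{\n f} z)_{jk}$. Differentiating the second produces $-(Ddf)^i{}_j z_{ik}$ (the $\mathrm{div}\, z$ contribution vanishes), which after replacing the Hessian contributes $-f (z \circ z)_{jk} + \frac{sf}{n(n-1)} z_{jk}$. The trace piece $\frac{\delta_{jk}}{n-1}\n^i(z_{i\ell}f^\ell)$ collapses to $\frac{f|z|^2}{n-1}\delta_{jk}$ after the same substitution combined with the tracelessness of $z$.

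The only non-bookkeeping step is the fourth piece, $-\frac{1}{n-1}\n_k(z_{j\ell} f^\ell)$. Here $f^\ell \n_k z_{j\ell}$ differentiates $z$ in the "wrong" slot to recognize $D_{\n f} z$ directly, and I rewrite it via the Cotton identity $\n_k z_{j\ell} = \n_\ell z_{jk} + C_{k\ell j}$, obtaining
\[
f^\ell \n_k z_{j\ell} = (D_{\n f} z)_{jk} + f^\ell C_{k\ell j} = (D_{\n f} z)_{jk} + \widehat C_{jk},
\]
where the second equality is just the definition of $\widehat C$ given in Lemma~\ref{lem2018-2-10-1}. This single Cotton-swap is precisely what produces the $-\widehat C$ term on the right-hand side of the claimed identity; the remaining factor $z_{j\ell}(Ddf)_k{}^\ell$ is again handled by the static equation and contributes further multiples of $z\circ z$ and $sfz$.

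Collecting the four contributions and multiplying through by $(n-1)$ assembles the coefficients $-\frac{n-2}{n-1} sf$ on $z$, $(n-2)$ on $D_{\n f}z$, $-nf$ on $z\circ z$, $+f|z|^2$ on $g$, and $-1$ on $\widehat C$, matching the statement. The only real risk is a coefficient slip, since the $sfz$ and $z\circ z$ terms receive contributions from several pieces with different $\frac{1}{n-1}$ prefactors; careful tracking is needed, but no structural ingredient beyond those listed above.
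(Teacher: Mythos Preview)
Your proposal is correct and follows essentially the same approach as the paper: the paper computes ${\rm div}(df\wedge z)$ and ${\rm div}(i_{\n f}z\wedge g)$ separately (equations (\ref{eqn2018-9-10-1}) and (\ref{eqn2018-9-10-2})) and then combines them via the definition of $T$, which amounts to exactly your four-term index expansion, and the Cotton swap you isolate is precisely how $-\widehat C$ arises in the paper's second piece. The only difference is organizational---index notation versus the coordinate-free wedge decomposition---and your coefficient bookkeeping is accurate.
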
 
\begin{proof} 
By using (\ref{eqn2018-9-5-1}) and (\ref{eqn2018-12-13-2}), we can compute
\bea
{\rm div} (df\wedge z) = - \frac{sf}{n}z + D_{\n f}z - f z\circ z.\label{eqn2018-9-10-1}
\eea
From the fact that $\delta z = 0$ and from (\ref{eqn2018-12-13-2}), we can obtain
\bea
{\rm div}  (i_{\n f}z \wedge g) = - D_{\n f}z - {\widehat C} + f|z|^2 g - fz\circ z 
+ \frac{sf}{n(n-1)} z. \label{eqn2018-9-10-2}
\eea
 From the definition of the tensor $T$, we have
 \bea
 (n-2)\, {\rm div}\,  T &=& {\rm div} (df\wedge z)  + \frac{1}{n-1} {\rm div} (i_{\n f}z \wedge g)\\
 &=&
- \frac{n-2}{(n-1)^2} sfz + \frac{n-2}{n-1}\, D_{\n f}z - \frac{n}{n-1}f \, z\circ z
 - \frac{1}{n-1}\, {\widehat C} + \frac{1}{n-1}f|z|^2 g.
 \eea
\end{proof}

We can also show the following by using (\ref{eqn09}).

\begin{lemma}\label{lem2017-12-25-10-100} 
Let $(M^n, g, f)$ be a non-trivial vacuum static space. Then,
\bea 
(n-1)(n-2)\, {\rm div}\,  T &=& - (n-2) f D^*Dz - n f z\circ z - \frac{n-2}{n-1} s f z 
+ f |z|^2 g \label{eqn2017-12-7-3-1}\\ 
&&
\quad + (n-3){\widehat C} - (n-2)i_{\n f}C - 2(n-2) f {\rm div}\,  C. \nonumber 
\eea 
\end{lemma}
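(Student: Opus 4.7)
The plan is to avoid re-expanding the definition of $T$ as was done for Lemma~\ref{lem2017-12-25-10-100-1}, and instead exploit the identity $(n-1)T=\tilde i_{\n f}\mathcal W-fC$ furnished by (\ref{eqn09}). Taking the divergence of both sides, using Lemma~\ref{lem2018-2-10-1} to handle ${\rm div}(\tilde i_{\n f}\mathcal W)$, and applying the product rule ${\rm div}(fC)=i_{\n f}C+f\,{\rm div}\,C$ (where $i_{\n f}C$ arises by contracting $df$ against the first slot of $C$, exactly as in the convention fixed after Proposition~\ref{lem2018-9-25-1}), one obtains
\[
(n-1)\,{\rm div}\,T \;=\; \tfrac{n-3}{n-2}\widehat C-f\,\mathring{\mathcal W}z-i_{\n f}C-f\,{\rm div}\,C.
\]
Multiplying this equation through by $n-2$ already produces the exact coefficients of $\widehat C$ and $i_{\n f}C$ that appear in (\ref{eqn2017-12-7-3-1}), so the only remaining task is to rewrite the term $(n-2)f\,\mathring{\mathcal W}z$.

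For this I would invoke Lemma~\ref{lem2017-12-24-2-1}, which is applicable since on any vacuum static space the scalar curvature $s$ is constant; it yields
\[
(n-2)f\,\mathring{\mathcal W}z \;=\; (n-2)f\,{\rm div}\,C+(n-2)fD^*Dz+nfz\circ z+\tfrac{(n-2)sf}{n-1}z-f|z|^2 g.
\]
Substituting this expression back into the multiplied equation, the two $f\,{\rm div}\,C$ contributions combine to give the $-2(n-2)f\,{\rm div}\,C$ term, while the $fD^*Dz$, $fz\circ z$, $sfz$, and $f|z|^2g$ terms appear with precisely the coefficients required by (\ref{eqn2017-12-7-3-1}).

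The argument is essentially bookkeeping once (\ref{eqn09}) and Lemma~\ref{lem2017-12-24-2-1} are brought to bear, and no geometric input beyond the previously established lemmas is needed. Accordingly, I do not expect a real obstacle: the only mild subtlety is tracking the signs when replacing $\mathring{\mathcal W}z$ and making sure the two product-rule applications (for ${\rm div}(fC)$ and inside Lemma~\ref{lem2018-2-10-1}) are consistent in their interior-product conventions.
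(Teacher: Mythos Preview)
Your proposal is correct and follows essentially the same route as the paper: take the divergence of (\ref{eqn09}), use Lemma~\ref{lem2018-2-10-1} for ${\rm div}(\tilde i_{\n f}\mathcal W)$ and the product rule for ${\rm div}(fC)$, then replace $\mathring{\mathcal W}z$ via Lemma~\ref{lem2017-12-24-2-1}. The only cosmetic difference is that the paper substitutes for $\mathring{\mathcal W}z$ before multiplying through by $n-2$, whereas you do it after.
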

\begin{proof} 
From (\ref{eqn09}) together with Lemma~\ref{lem2017-12-24-2-1}
 and Lemma~\ref{lem2018-2-10-1}, we have
\bea
(n-1){\rm div} \, T &=& {\rm div} ({\tilde i}_{\n f}{\mathcal W}) - i_{\n f}C - f{\rm div}\,  C\\
&=&
- f {\mathring W}z + \frac{n-3}{n-2} {\widehat C} - i_{\n f}C - f {\rm div} \, C\\
 &=& 
- f D^*Dz - \frac{n}{n-2} f z\circ z - \frac{s}{n-1}f z + \frac{1}{n-2}f|z|^2 g \\
 &&
 \quad  + \frac{n-3}{n-2}\,  {\widehat C} - i_{\n f}C - 2 f\,  {\rm div}\,  C. 
 \eea 
 
\end{proof}

Comparing Lemma~\ref{lem2017-12-25-10-100-1} with
Lemma~\ref{lem2017-12-25-10-100}, we obtain the following.

\begin{corollary} \label{cor2018-9-10-3}
Let $(M, g, f)$ be an $n$-dimensional  vacuum static space. Then, 
\be 
f D^*Dz + D_{\n f} z 
= {\widehat C} - i_{\n f}C - 2 f \, {\rm div} \, C.\label{eqn2018-2-21-10} 
 \ee
\end{corollary}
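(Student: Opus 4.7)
The plan is to obtain the identity by simply equating the two independent expressions for $(n-1)(n-2)\,{\rm div}\,T$ derived in Lemma~\ref{lem2017-12-25-10-100-1} and Lemma~\ref{lem2017-12-25-10-100}, and then solving for $fD^*Dz + D_{\n f}z$. No new geometric input is needed beyond what those two lemmas already supply; the corollary is essentially a bookkeeping consequence of their having different right-hand sides.

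Concretely, I would write the right-hand side of Lemma~\ref{lem2017-12-25-10-100-1}, namely
\[
-\tfrac{n-2}{n-1}sfz + (n-2)D_{\n f}z - \widehat C - nfz\circ z + f|z|^2 g,
\]
equal to the right-hand side of Lemma~\ref{lem2017-12-25-10-100},
\[
-(n-2)fD^*Dz - nfz\circ z - \tfrac{n-2}{n-1}sfz + f|z|^2 g + (n-3)\widehat C - (n-2)i_{\n f}C - 2(n-2)f{\rm div}\,C.
\]
The terms $-\tfrac{n-2}{n-1}sfz$, $-nfz\circ z$, and $f|z|^2 g$ appear on both sides and cancel, which is the crucial observation that makes the identity clean.

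After cancellation I am left with
\[
(n-2)D_{\n f}z - \widehat C = -(n-2)fD^*Dz + (n-3)\widehat C - (n-2)i_{\n f}C - 2(n-2)f{\rm div}\,C.
\]
Rearranging and combining the two $\widehat C$ terms gives
\[
(n-2)\bigl(fD^*Dz + D_{\n f}z\bigr) = (n-2)\widehat C - (n-2)i_{\n f}C - 2(n-2)f{\rm div}\,C,
\]
and dividing through by $n-2$ yields the desired formula (\ref{eqn2018-2-21-10}).

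There is no real obstacle here; the only thing to be careful about is the sign and coefficient bookkeeping, in particular verifying that the three algebraic terms $sfz$, $z\circ z$, and $|z|^2 g$ genuinely cancel between the two expressions and that the two separate occurrences of $\widehat C$ combine with the correct sign to give the coefficient $+1$ after dividing by $n-2$. Since both Lemma~\ref{lem2017-12-25-10-100-1} and Lemma~\ref{lem2017-12-25-10-100} are already proved, this comparison constitutes a complete proof of the corollary.
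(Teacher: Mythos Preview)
Your proposal is correct and is exactly the approach the paper takes: the corollary is stated immediately after the two lemmas with the one-line justification ``Comparing Lemma~\ref{lem2017-12-25-10-100-1} with Lemma~\ref{lem2017-12-25-10-100}, we obtain the following.'' Your cancellation and coefficient bookkeeping are accurate.
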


\begin{lemma}\label{prop2018-9-10-6}
Let $(M, g, f)$ be an $n$-dimensional  vacuum static space satisfying ${\rm div}^2 B=0$. If $i_{\n f}B = 0$, 
then the function $|z|^2$ attains its maximum on the set $f^{-1}(0)$.
\end{lemma}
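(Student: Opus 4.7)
My plan is to reduce the hypothesis to the much stronger statement that both the Bach tensor and the Cotton tensor vanish on $M$, and then exploit the identity of Corollary~\ref{cor2018-9-10-3} to derive a clean elliptic inequality for $|z|^2$ that can be attacked by the strong maximum principle on each component of $M\setminus f^{-1}(0)$.

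Concretely, I would proceed in four steps. First, $i_{\n f}B=0$ forces the weaker condition $B(\n f,\n f)=0$ pointwise, so Lemma~\ref{lem2018-2-10-2} gives
$$
0 \;=\; \int_{M_{t,t'}} f^2 B(\n f,\n f) \;=\; -\frac{n-1}{2}\int_{M_{t,t'}} f^2|T|^2
$$
for every pair of regular values $t<t'$. Because $f^{-1}(0)$ has measure zero, this forces $T\equiv 0$ on $M$. Second, since ${\rm div}^2B=0$ is part of the hypothesis, Proposition~\ref{prop2018-1-20-11} applies and yields $B=0$ and $C=0$ on all of $M$. Third, with $C=0$ Corollary~\ref{cor2018-9-10-3} collapses to $fD^*Dz + D_{\n f}z = 0$. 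Pairing this with $z$ and using the Bochner-type identity $\tr|z|^2 = 2\langle D^*Dz,z\rangle - 2|Dz|^2$ (recall the paper's sign convention $\tr u = -\sum_i E_iE_iu$), I obtain
$$
f\,\tr|z|^2 + \n f(|z|^2) \;=\; -2f|Dz|^2.
$$

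Fourth, on each connected component $U$ of $M\setminus f^{-1}(0)$ the function $f$ has fixed sign, so dividing by $f$ shows that $u:=|z|^2$ is a subsolution of a smooth second-order elliptic operator of the form $\sum_i E_iE_i - \frac{1}{f}\,\n f\cdot\n$, which carries no zero-order term. The strong maximum principle then prevents $u$ from attaining an interior maximum on $U$ unless it is constant there; in either case $\sup_{\overline U}u$ is achieved on $\partial U\subset f^{-1}(0)$. Combining across components gives the lemma. The main obstacle is really the very first step: reducing to $T\equiv 0$ relies on the integral identity of Lemma~\ref{lem2018-2-10-2} and so on the compact-level-set hypothesis standing throughout this section. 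Once this reduction is in hand, the remaining chain---Proposition~\ref{prop2018-1-20-11}, Corollary~\ref{cor2018-9-10-3}, and the maximum principle---is essentially automatic.
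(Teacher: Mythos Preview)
Your proposal is correct and follows essentially the same route as the paper: first use Lemma~\ref{lem2018-2-10-2} to force $T\equiv 0$, then Proposition~\ref{prop2018-1-20-11} to get $C=0$, then Corollary~\ref{cor2018-9-10-3} paired against $z$ to obtain an elliptic equation for $|z|^2$, and finally the maximum principle on the regions where $f$ has a fixed sign. The only cosmetic differences are that the paper applies the maximum principle on the sets $\{f\ge\epsilon\}$ and $\{f\le -\epsilon\}$ and then lets $\epsilon\to 0$ (rather than working directly on components of $M\setminus f^{-1}(0)$), and that the paper's convention is actually $\Delta u = \sum_i E_iE_i u$ (trace of the Hessian), not $-\sum_i E_iE_i u$ as you wrote---your Bochner identity therefore has the opposite sign to the paper's $\tfrac12\Delta|z|^2=-\langle D^*Dz,z\rangle+|Dz|^2$, but since your computation is self-consistent this does not affect the argument.
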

\begin{proof}
From Lemma~\ref{lem2018-2-10-2} and Proposition~\ref{prop2018-1-20-11}, $T=0=C$. Therefore,
$$
f D^*Dz + D_{\n f}z =0
$$
from Corollary~\ref{cor2018-9-10-3}.
Since $\displaystyle{\frac{1}{2}\Delta |z|^2 = -\langle D^*Dz, z\rangle + |Dz|^2}$,
we have
\be
\frac{1}{2}\, f\Delta |z|^2 - \frac{1}{2}\n f(|z|^2) = f |Dz|^2.\label{eqn2018-9-10-4}
\ee
This shows that, on the set $f^{-1}(0)$,
$$
\n f(|z|^2 ) = 0.
$$
Now, on the set $\{f \ge \epsilon\}$, for a sufficiently small positive real number $\epsilon >0$,
 (\ref{eqn2018-9-10-4}) can be expressed as
$$
\frac{1}{2}\Delta |z|^2 - \frac{1}{2f}\n f(|z|^2) = |Dz|^2.
$$
By applying the maximum principle, we obtain
$$
\max_{f\ge \epsilon} |z|^2 = \max_{f = \epsilon}|z|^2.
$$
By letting $\epsilon \to 0$, we finally obtain
$$
\max_{f\ge 0} |z|^2 = \max_{f = 0}|z|^2.
$$
Similarly, by applying the maximum principle on the set $\{f \le -\epsilon\}$, we have
$$
\max_{f\le 0} |z|^2 = \max_{f = 0}|z|^2.
$$
Hence we prove our Lemma. 
\end{proof}

Now, we are ready to prove Theorem~\ref{main12}.

From Lemma~\ref{prop2018-9-10-6}, $|z|^2$ should be constant on $M$; therefore, $\langle D^*Dz, z\rangle = |Dz|^2$.
From Lemma~\ref{lem2017-12-25-10-100}, we have
\be
 (n-2)|Dz|^2 + n \langle z\circ z, z\rangle +\frac{n-2}{n-1}s |z|^2 =0.\label{eqn2018-9-10-9}
 \ee
  Fix a point $p \in M$ and choose a local frame that diagonalizes $z$ at $p$. Then,
 (\ref{eqn2018-9-10-9}) becomes
 $$
  (n-2)|Dz|^2 + n|z|^3 + \frac{n-2}{n-1}s|z|^2 = 0
 $$
 at the point $p$, which implies that $|z|(p) =0$. 
 Since   the point $p$ is arbitrary, we can conclude that $z = 0$ on $M$.
 Finally, from (\ref{eqn2018-12-13-2}),
 $$
 Ddf + \frac{sf}{n(n-1)}g = 0,
 $$
 which implies that $(M, g)$ is isometric to a standard sphere, as shown by Obata (\cite{Ob}).

\vskip .5pc

Finally, we will show some rigidity results for vacuum static spaces with complete divergence 
of the tensor $T$. That is, we prove that, if  $(M^n, g, f)$ is a non-trivial vacuum static space 
with  compact level sets of $f$ and if  ${\rm div}^2 B=0$ and  ${\rm div}^3 T=0$, 
then $T=0$. Therefore, $(M, g)$ must be Bach-flat.

To show this, we first need the following property on the complete divergence of $T$.

\begin{lemma} \label{lem3}
$$ 
{\rm div}^3 T = \frac {2(n-2)}{n-4} {\rm div} \, B(\nabla f) 
+ \frac {n-2}{n-4} f\,  {\rm div}^2 B +\langle {\rm div} \, T, z\rangle.
$$
\end{lemma}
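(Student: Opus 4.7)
The plan is to start from Lemma~\ref{lem2018-2-10-3}, which gives the identity
$$
{\rm div}^2 T(X) = \frac{1}{n-2}\, f\langle i_X C, z\rangle + \langle i_X T, z\rangle,
$$
and take one more divergence, splitting the computation into a ``$C$-part'' and a ``$T$-part.'' Working in a local geodesic frame $\{E_i\}$, I would write ${\rm div}^3 T = \sum_i D_{E_i}({\rm div}^2 T)(E_i)$ and treat the two pieces separately.

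For the first piece $\frac{1}{n-2}\sum_i D_{E_i}\bigl(f \langle i_{E_i}C, z\rangle\bigr)$, the derivative hitting $f$ produces $\frac{1}{n-2}\langle i_{\n f}C, z\rangle$, which by Proposition~\ref{lem2018-9-25-1} equals $\frac{n-2}{n-4}\, {\rm div}\, B(\n f)$. The derivative hitting $C$ gives $\frac{f}{n-2}\langle {\rm div}\, C, z\rangle$, and the derivative hitting $z$ gives $\frac{f}{n-2}\sum C_{ijk}\, D_{E_i}z_{jk}$. Since $s$ is constant, $C = d^D z$ so $C_{ijk}=D_{E_i}z_{jk}-D_{E_j}z_{ik}$, and the antisymmetry of $C$ in $(i,j)$ collapses this last sum to $\tfrac12 |C|^2$. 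Combining via Corollary~\ref{ddivb} yields $f\langle {\rm div}\, C, z\rangle + \tfrac12 f|C|^2 = \frac{(n-2)^2}{n-4}\, f\, {\rm div}^2 B$, so the first piece contributes $\frac{n-2}{n-4}\, {\rm div}\, B(\n f) + \frac{n-2}{n-4}\, f\, {\rm div}^2 B$.

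For the second piece $\sum_i D_{E_i}\langle i_{E_i}T, z\rangle$, the derivative on $T$ gives $\langle {\rm div}\, T, z\rangle$, while the derivative on $z$ gives $\sum T_{ijk}\, D_{E_i}z_{jk}$. Since $T$ is antisymmetric in its first two slots, the same swap-and-antisymmetrize trick reduces this to $\tfrac12 \langle T, C\rangle$, which by (\ref{eqn2018-12-13-6}) equals $\frac{1}{n-2}\langle i_{\n f}C, z\rangle = \frac{n-2}{n-4}\, {\rm div}\, B(\n f)$. Summing the two pieces then gives exactly
$$
{\rm div}^3 T = \frac{2(n-2)}{n-4}\, {\rm div}\, B(\n f) + \frac{n-2}{n-4}\, f\, {\rm div}^2 B + \langle {\rm div}\, T, z\rangle.
$$

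The computation is essentially algebraic bookkeeping, so there is no real obstacle of substance; the one subtlety worth care is ensuring that both antisymmetry reductions (in $C_{ijk}$ and in $T_{ijk}$) are set up with the correct sign so that the ``$Dz$-against-$C$'' and ``$Dz$-against-$T$'' contributions collapse to $\tfrac12|C|^2$ and $\tfrac12\langle T,C\rangle$ respectively. Once this is in hand, invoking Corollary~\ref{ddivb}, Proposition~\ref{lem2018-9-25-1}, and identity (\ref{eqn2018-12-13-6}) converts everything into the desired combination of divergences of $B$ and the $\langle {\rm div}\, T, z\rangle$ term.
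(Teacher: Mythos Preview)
Your proof is correct and follows essentially the same approach as the paper: start from Lemma~\ref{lem2018-2-10-3}, take one more divergence, and then convert via Proposition~\ref{lem2018-9-25-1}, Corollary~\ref{ddivb}, and (\ref{eqn2018-12-13-6}). The only cosmetic difference is that the paper first substitutes Proposition~\ref{lem2018-9-25-1} into (\ref{eqn2018-2-9-1}) to rewrite $\frac{1}{n-2}f\langle i_X C, z\rangle$ as $\frac{n-2}{n-4}f\,{\rm div}\,B(X)$ \emph{before} differentiating (so the ``$C$-part'' divergence is immediate), whereas you differentiate first and then invoke Corollary~\ref{ddivb} to recombine; the ``$T$-part'' and the antisymmetry reductions are identical.
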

\begin{proof}
By substituting the equation in Proposition~\ref{lem2018-9-25-1} into  (\ref{eqn2018-2-9-1}), we obtain
\bea
 {\rm div}^2 T(X)  
&=&  \frac {n-2}{n-4} f \, {\rm div}\,  B(X) + \langle i_XT, z\rangle.
\eea
By taking the divergence of this equation again,  we have
$$
{\rm div}^3 T=\frac {n-2}{n-4}\,  {\rm div}\,  B(\nabla f)  +\frac {n-2}{n-4} f\,  {\rm div}^2 B 
+ \langle {\rm div} \, T, z\rangle +\frac 12 \langle T, C\rangle.$$
Our Lemma follows from Proposition~\ref{lem2018-9-25-1} and (\ref{eqn2018-12-13-6}).
\end{proof}

Let $M_t=\{x\in M\, \vert\, f(x)<t\}$.
\begin{theorem} \label{thm101}
Let $n \ge 5$ and $(M, g, f)$ be an $n$-dimensional  vacuum static space of compact level sets 
of $f$ with ${\rm div}^2  B=0$.  If ${\rm div}^3 T=0$ and $f$ has its minimum (or maximum) in $M$, then $(M, g)$ is Bach-flat. 
\end{theorem}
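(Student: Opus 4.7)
The plan is to deduce $T=0$, from which Proposition~\ref{prop2018-1-20-11} immediately yields $B=0$. The main identity driving the argument is a boundary-integral formula for ${\rm div}^2 T(N)$ on regular level sets of $f$.

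First, evaluating Lemma~\ref{lem2018-2-10-3} at $X=N=\nabla f/|\nabla f|$ and using the level-set identity (\ref{eqn2017-5-26-3-1}) with $\operatorname{tr}z=0$, I obtain the pointwise formula
$$
{\rm div}^2 T(N) \;=\; \frac{f}{(n-2)|\nabla f|}\langle i_{\nabla f}C,z\rangle \;+\; \frac{|\nabla f|}{n-2}\,|z|^2.
$$
Integrating over $\Gamma_t$ gives $(n-2)\int_{\Gamma_t}{\rm div}^2 T(N)\,dA = tF + G(t)$, where $F(t):=\int_{\Gamma_t}|\nabla f|^{-1}\langle i_{\nabla f}C,z\rangle\,dA$ and $G(t):=\int_{\Gamma_t}|\nabla f|\,|z|^2\,dA$. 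The hypothesis ${\rm div}^2 B=0$ together with Lemma~\ref{lem2} shows that $F(t)$ is independent of $t$, so I write $F(t)\equiv F$.

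Next, for $t$ just above $f_{\min}:=f(p)$ with $p$ the interior minimum (the maximum case is symmetric), the sublevel region $\{f\le t\}$ is a compact tubular neighborhood of the compact critical set $\{f=f_{\min}\}$, with smooth boundary $\Gamma_t$. The divergence theorem combined with ${\rm div}^3 T=0$ forces $\int_{\Gamma_t}{\rm div}^2 T(N)\,dA=0$; the Lemma~\ref{lem2}-style constancy argument applied to ${\rm div}^3 T=0$ on the annular regions $M_{t_1,t_2}$ extends this vanishing to every regular $t>f_{\min}$. Consequently $tF+G(t)\equiv 0$, i.e., $G(t)=-tF$. Letting $t\downarrow f_{\min}$, the integrand $|\nabla f|\,|z|^2$ of $G$ vanishes on the critical set onto which $\Gamma_t$ concentrates, so $G(t)\to 0$ and hence $f_{\min}F=0$.

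To conclude $F=0$, I argue that $f_{\min}\ne 0$: if $f(p)=0$ at the interior critical point, then (\ref{eqn2018-12-13-2}) at $p$ forces $Ddf(p)=0$, and differentiating (\ref{eqn2018-12-13-2}) iteratively at $p$ gives $D^k f(p)=0$ for every $k\ge 0$; Aronszajn's unique continuation applied to the elliptic equation $\Delta f=-\frac{s}{n-1}f$ then forces $f\equiv 0$, contradicting non-triviality. Hence $F=0$ and $G\equiv 0$, which means $|z|^2=0$ on every regular $\Gamma_t$, and by continuity together with the density of regular values, $z\equiv 0$ on $M$. From the definition (\ref{defnt}) this gives $T=0$, and Proposition~\ref{prop2018-1-20-11} closes the argument. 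The principal technical obstacle is making the limit $G(t)\to 0$ rigorous at the extremum without a priori non-degeneracy of $p$; this rests on combining compactness of the critical set (ensured by compact level sets) with the continuity of $|z|^2$ and the vanishing of $|\nabla f|$ there.
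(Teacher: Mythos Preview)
Your argument contains a substantive computational error that invalidates the conclusion. You invoke identity (\ref{eqn2017-5-26-3-1}) to evaluate $\langle i_N T,z\rangle$, but that formula for $i_{\nabla f}T$ holds only when both arguments are \emph{tangent} to the level hypersurface: the full expression for $T(\nabla f,Y,Z)$ has extra terms $-\tfrac{1}{n-2}\,df(Y)\,z(\nabla f,Z)-\tfrac{1}{(n-1)(n-2)}\,z(\nabla f,Y)\,df(Z)$ that vanish only for $Y,Z\perp\nabla f$. The correct ambient identity is (\ref{eqn1-1}), which gives
\[
\langle i_{\nabla f}T,z\rangle=\frac{n-2}{2}\,|T|^2
=\frac{|\nabla f|^2}{n-2}\Bigl(|z|^2-\frac{n}{n-1}|i_Nz|^2\Bigr),
\]
not $\tfrac{|\nabla f|^2}{n-2}|z|^2$. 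Consequently your $G(t)$ should be $\tfrac{(n-2)^2}{2}\int_{\Gamma_t}|T|^2/|\nabla f|$, and the most you can conclude from $G\equiv 0$ is $T=0$, not $z=0$. Your stated conclusion $z\equiv 0$ is in fact false under the hypotheses: the Schwarzschild metric is a Bach-flat vacuum static space (hence satisfies ${\rm div}^2B=0$ and, by Lemma~\ref{lem2018-2-10-2}, $T=0$, whence ${\rm div}^3T=0$) with $z\neq 0$.

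Even after this correction, the limiting step $G(t)\to 0$ as $t\to f_{\min}$ remains the unresolved ``principal technical obstacle'' you flag; without non-degeneracy of the minimum it is not obviously justified, and your sign constraint $G(t)=-tF\ge 0$ does not by itself force $F=0$ unless $f$ is known to change sign. The paper's proof sidesteps both issues by working one derivative higher: it uses Lemma~\ref{lem3} to write ${\rm div}^3T$ pointwise in terms of ${\rm div}B(\nabla f)$, $f\,{\rm div}^2B$ and $\langle{\rm div}T,z\rangle$, integrates over the solid region $M_t$, and shows directly (via an integration by parts that pulls the factor $t$ outside, together with ${\rm div}^2B=0$) that $\int_{M_t}\langle{\rm div}T,z\rangle=0$ and $\int_{M_t}\langle T,C\rangle=0$. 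A further integration by parts then yields $\int_{\Gamma_t}|\nabla f|^{-1}\langle i_{\nabla f}T,z\rangle=0$ for every regular $t$, i.e.\ $\int_{\Gamma_t}|T|^2/|\nabla f|=0$, with no limiting argument needed.
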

\begin{proof}
 From Lemma~\ref{lem3},
\be \frac 12 \langle {\rm div} \, T, z\rangle =- \frac {n-2}{n-4}\, {\rm div} B(\nabla f).\label{eqs5}\ee
Since
$$ \int_{M_t} f \, {\rm div}^2 B= \int_{\Gamma_t}  \frac f{|\nabla f|} {\rm div} B(\nabla f) 
- \int_{M_t} {\rm div} B(\nabla f),$$
we have
\be 
\int_{M_t} {\rm div} B(\nabla f)= t\,  \int_{\Gamma_t}\frac 1{|\nabla f|} {\rm div} B(\nabla f)
= t\, \int_{M_t} {\rm div}^2 B=0.\label{eq2}
\ee
Therefore, from  (\ref{eqs5}) and (\ref{eq2}),  we have
$$
\frac 12 \int_{M_t} \langle {\rm div}\,  T, z\rangle = -\frac {n-2}{n-4}\int_{M_t} {\rm div} B(\nabla f)=0.
$$
On the other hand, it is easy to see that 
$$ 
\int_{\Gamma_t}\frac 1{|\nabla f|}\langle i_{\nabla f}T, z\rangle
= \int_{M_t} {\rm div} (T(\cdot, E_j,E_k)z_{jk}) =  \int_{M_t}\langle {\rm div} \, T, z\rangle 
+ \frac 12 \int_{M_t} \langle T, C\rangle.
$$
Note that, from Proposition~\ref{lem2018-9-25-1} and  (\ref{eqn2018-12-13-6}), we have
$$ 
\frac 12 \int_{M_t} \langle T, C\rangle = \frac {n-2}{n-4}\int_{M_t} {\rm div}\,  B(\nabla f)=0.
$$
Since, from Lemma~\ref{lem2018-9-25-3},
$$\langle i_{\nabla f}T, z\rangle = \frac {n-2}2\, |T|^2,$$
we have
$$ 0=\int_{M_t} \langle {\rm div}\,  T, z\rangle 
= \int_{\Gamma_t}\frac 1{|\nabla f|}\langle i_{\nabla f}T, z\rangle
=  \frac {n-2}2\int_{\Gamma_t} \frac {|T|^2}{|\nabla f|}.
$$
As a result, we may conclude that $T=0$ on $\Gamma_t$ for all $t$, which implies that
$T=0$ on all of $M$. 
Therefore, the proof follows from Proposition~\ref{prop2018-1-20-11}.
\end{proof}

\begin{theorem} \label{main2-100-100}
Let $(M, g, f)$ be a four-dimensional non-trivial complete vacuum static space with compact level sets of $f$. If
$$
 \frac 12 |C|^2= - \langle {\rm div} \, C, z\rangle, $$ 
${\rm div}^3\, T=0$,  and $f$ has its minimum (or maximum)  in $M$, then $(M, g)$ is Bach-flat.\end{theorem}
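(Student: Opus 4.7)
The plan is to imitate the proof of Theorem~\ref{thm101} while bypassing Lemma~\ref{lem3}, whose factor $1/(n-4)$ is singular at $n=4$. The key observation is that the hypothesis $\tfrac{1}{2}|C|^{2}=-\langle{\rm div}\,C,z\rangle$ is precisely what makes the $1$-form $\alpha(X):=\langle i_{X}C,z\rangle$ divergence-free. Indeed, in a geodesic frame, ${\rm div}\,\alpha=\langle{\rm div}\,C,z\rangle+\sum C_{ijk}(D_{E_{i}}z)_{jk}$, and since $s$ is constant we have $C=d^{D}z$, whose skew-symmetry in the first two indices gives $\sum C_{ijk}(D_{E_{i}}z)_{jk}=\tfrac{1}{2}|C|^{2}$; hence the hypothesis is equivalent to ${\rm div}\,\alpha=0$.

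With this in hand, I would take one more divergence in Lemma~\ref{lem2018-2-10-3}, which for $n=4$ reads ${\rm div}^{2}T(X)=\tfrac{1}{2}f\,\alpha(X)+\langle i_{X}T,z\rangle$. Using ${\rm div}(f\alpha)=\alpha(\nabla f)+f\,{\rm div}\,\alpha=\alpha(\nabla f)$ from the first step, the analogous skew-symmetry identity $\sum T_{ijk}(D_{E_{i}}z)_{jk}=\tfrac{1}{2}\langle T,C\rangle$, and the specialization of (\ref{eqn2018-12-13-6}) to $n=4$, namely $\langle T,C\rangle=\langle i_{\nabla f}C,z\rangle$, I would arrive at the four-dimensional substitute for Lemma~\ref{lem3}:
\[
{\rm div}^{3}T=\langle i_{\nabla f}C,z\rangle+\langle{\rm div}\,T,z\rangle.
\]
Imposing ${\rm div}^{3}T=0$ yields $\langle{\rm div}\,T,z\rangle=-\alpha(\nabla f)$.

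Next, I would integrate over $M_{t}=\{f<t\}$, which is relatively compact because $f$ attains a minimum (or maximum) in $M$ and the level sets of $f$ are compact. On the one hand,
\[
\int_{\Gamma_{t}}\frac{\langle i_{\nabla f}T,z\rangle}{|\nabla f|}=\int_{M_{t}}\langle{\rm div}\,T,z\rangle+\tfrac{1}{2}\int_{M_{t}}\langle T,C\rangle.
\]
On the other hand, applying the divergence theorem to $f\alpha$ gives $\int_{M_{t}}\alpha(\nabla f)=t\int_{\Gamma_{t}}\alpha(\nabla f)/|\nabla f|=t\int_{M_{t}}{\rm div}\,\alpha=0$. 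Substituting $\langle{\rm div}\,T,z\rangle=-\alpha(\nabla f)$ and $\langle T,C\rangle=\alpha(\nabla f)$ on the right, and invoking Lemma~\ref{lem2018-9-25-3} (which for $n=4$ reads $\langle i_{\nabla f}T,z\rangle=|T|^{2}$) on the left, yields $\int_{\Gamma_{t}}|T|^{2}/|\nabla f|=0$ for every regular value $t$. Hence $T\equiv 0$ on $M$ by continuity, and since ${\rm div}\,B=0$ holds identically for $n=4$, Proposition~\ref{prop2018-1-20-11} then gives $B=0$.

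The main technical step is producing the four-dimensional substitute for Lemma~\ref{lem3}: the $1/(n-4)$ denominator precludes a direct substitution from the $n\geq 5$ argument, and the hypothesis $\tfrac{1}{2}|C|^{2}+\langle{\rm div}\,C,z\rangle=0$ is exactly what is needed to play the role that the vanishing of ${\rm div}\,B$ and ${\rm div}^{2}B$ plays in the higher-dimensional case, via the divergence-freeness of $\alpha$.
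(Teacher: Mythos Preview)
Your proposal is correct and follows essentially the same route as the paper: you derive the four-dimensional substitute ${\rm div}^{3}T=\langle i_{\nabla f}C,z\rangle+\langle{\rm div}\,T,z\rangle$, integrate over $M_{t}$, and conclude $T=0$ via Lemma~\ref{lem2018-9-25-3} and then $B=0$ via Proposition~\ref{prop2018-1-20-11}. The only cosmetic difference is that the paper obtains $\int_{M_{t}}\langle i_{\nabla f}C,z\rangle=0$ via the co-area formula together with the level-set identity from Lemma~\ref{lem2}, whereas you reach the same conclusion by applying the divergence theorem directly to $f\alpha$; both arguments rest on the same observation that the hypothesis makes $\alpha$ divergence-free.
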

\begin{proof}
Recall that,  for $n=4$, we always have ${\rm div} B=0$.
Thus, similarly to Lemma~\ref{lem2018-2-10-3}, we have
$$ 
(n-1)\, {\rm div}^2 T (X)= -\mathring{\mathcal W}z(\nabla f, X) +\frac 12 f \langle i_XC, z\rangle,
$$
and
$$ {\rm div}^3 T = \langle i_{\nabla f}C, z\rangle +\langle {\rm div} \, T, z\rangle.$$
Thus, if ${\rm div}^3 T=0$, 
$$
\langle {\rm div} \, T, z\rangle =  -\langle i_{\nabla f}C, z\rangle.
$$
Next, as in the proof of Lemma~\ref{lem2}, we have
$$ 
\int_{\Gamma_t}\frac 1{|\nabla f|} \langle i_{\nabla f}C, z\rangle  =0
$$
when $n=4$ under the condition  $\frac 12 |C|^2 = - \langle {\rm div} C, z\rangle$.
Therefore,  from the co-area formula, we have
$$
\int_{M_t}\langle i_{\n f}C, z\rangle = \int_{M_t}|\n f|\langle i_N C, z\rangle
=\int_a^t\left(\int_{\Gamma_s} \langle i_N C, z\rangle \, d\sigma\right )\, ds =0,
$$
where $a = \min_M  f$. 
Finally,  as in the proof of Theorem~\ref{thm101},  we have
$$
\int_{\Gamma_t}\frac 1{|\nabla f|} \langle i_{\nabla f}T, z\rangle 
=\int_{M_t} \langle {\rm div} \, T, z\rangle +\frac 12\int_{M_t}\langle i_{\nabla f}C, z\rangle
= - \frac 12\int_{M_t}\langle i_{\nabla f}C, z\rangle =0.
$$
Since 
$$
\langle i_{\nabla f}T, z\rangle = \frac{n-2}{2} |T|^2,
$$
we may conclude that $T$ vanishes identically on $M$.

\end{proof}
By combining Theorem~\ref{thm101} and \ref{main2-100-100}, Theorem~\ref{thm101b} follows.

\end{document}